\def\@logofont{\footnotesize}
\def\@setaddresses{\par
  \nobreak \begingroup
  \footnotesize
  \def\author##1{\nobreak\addvspace\bigskipamount}%
  \def\\{\par\nobreak}%
  \interlinepenalty\@M
  \def\address##1##2{\begingroup
    \par\addvspace\bigskipamount\indent
    \@ifnotempty{##1}{(\ignorespaces##1\unskip) }%
    {\scshape\ignorespaces##2}\par\endgroup}%
  \def\curraddr##1##2{\begingroup
    \@ifnotempty{##2}{\nobreak\indent\curraddrname
      \@ifnotempty{##1}{, \ignorespaces##1\unskip}\/:\space
      ##2\par}\endgroup}%
  \def\email##1##2{\begingroup
    \@ifnotempty{##2}{\nobreak\indent\emailaddrname
      \@ifnotempty{##1}{, \ignorespaces##1\unskip}\/:\space
      \ttfamily##2\par}\endgroup}%
  \def\urladdr##1##2{\begingroup
    \def~{\char`\~}%
    \@ifnotempty{##2}{\nobreak\indent\urladdrname
      \@ifnotempty{##1}{, \ignorespaces##1\unskip}\/:\space
      \ttfamily##2\par}\endgroup}%
  \addresses
  \endgroup
}
\renewcommand*\subjclass[2][2010]{%
  \def\@subjclass{#2}%
  \@ifundefined{subjclassname@#1}{%
    \ClassWarning{\@classname}{Unknown edition (#1) of Mathematics
      Subject Classification; using '2000'.}%
  }{%
    \@xp\let\@xp\subjclassname\csname subjclassname@#1\endcsname
  }%
}
\newtheorem{theorem}{Theorem}[section]
\newtheorem*{theorem*}{Theorem}
\newtheorem{observation}[theorem]{Observation}
\newtheorem{corollary}[theorem]{Corollary}
\theoremstyle{definition}
\newtheorem{definition}[theorem]{Definition}
\theoremstyle{remark}
\newtheorem{remark}[theorem]{Remark}
\newtheorem{example}[theorem]{Example}
\newcommand{\M}{\mathcal{M}}
\newcommand{\I}{\mathcal{I}}
\newcommand{\N}{\mathcal{N}}
\begin{document}
\title{Matchings in matroids over abelian groups, II}

\thanks{\textbf{Keywords and phrases}. basis system, matchable bases, panhandle matroids, Schubert matroids, sparse paving matroids.}
\thanks{\textbf{2020 Mathematics Subject Classification}. Primary: 05B35 ; Secondary: 05D15, 05E16.}

\author[M. Aliabadi, Y. Wu, S. Yermolenko]{Mohsen Aliabadi$^{1,a,*}$ \and Yujia Wu$^{2,b}$ \and Sophia Yermolenko$^{2,c}$}
\thanks{$^1$Department of Mathematics, Clayton State University, 2000 Clayton State Boulevard, Morrow, GA 30260, USA.}
\thanks{$^2$Department of Mathematics, University of California, San Diego, 
9500 Gilman Dr, La Jolla, CA 92093, USA.}

\thanks{$^a$ \url{maliabadisr@clayton.edu}}
\thanks{$^b$ \url{yuw172@ucsd.edu}}
\thanks{$^c$ \url{syermolenko@ucsd.edu}}
\thanks{$^*$ Corresponding Author.}

\begin{abstract}
The concept of matchings originated in group theory to address a linear algebra problem related to canonical forms for symmetric tensors. In an abelian group $(G,+)$, a matching is a bijection $f: A \to B$ between two finite subsets $A$ and $B$ of $G$ such that $a + f(a) \notin A$ for all $a \in A$. A group $G$ has the matching property if, for every two finite subsets $A, B \subset G$ of the same size with $0 \notin B$, there exists a matching from $A$ to $B$. In prior work \cite{Zerbib 0}, matroid analogues of results concerning matchings in groups were introduced and established. This paper serves as a sequel, extending that line of inquiry by investigating sparse paving, panhandle, and Schubert matroids through the lens of matchability. While some proofs draw upon earlier findings on the matchability of sparse paving matroids, the paper is designed to be self-contained and accessible without reference to the preceding sequel. Our approach combines tools from both matroid theory and additive number theory.

\end{abstract}

\maketitle

\section{Introduction}

\subsection{Matchings in abelian Groups and their linear versions}

Throughout this paper, we assume that $G$ is an additive abelian group, unless explicitly stated otherwise. Let $B$ be a finite subset of $G$ that excludes the neutral element $0$. For any subset, $A \subset G$ with the same cardinality as $B$, a \textit{matching} from $A$ to $B$ is a bijection $f: A \to B$ such that for any element $a \in A$, the sum $a + f(a)$ does not belong to $A$. The necessary conditions for a matching from $A$ to $B$ to exist are that $|A| = |B|$ and $0 \notin B$. We say that $G$ has the \emph{matching property} if these are sufficient conditions as well.

The concept of matchings in abelian groups was introduced by Fan and Losonczy \cite{Fan} to generalize a geometric property of lattices in Euclidean space, related to an old linear algebra problem by E. K. Wakeford \cite{Wakeford} concerning canonical forms for symmetric tensors.

Matchings have been extensively studied in the context of groups (see \cite{Alon, Losonczy, Aliabadi 0, Aliabadi 1, Aliabadi 4, Aliabadi 5} for various results and \cite{Hamidoune} for enumerative aspects). A closely related concept is that of matchings between subspaces of a field extension, where linear analogues and comparable results are presented in \cite{Eliahou 2}.

It is worth mentioning that while there are connections between the matching theory discussed in this paper and the matching theory introduced in 1935 by Philip Hall, the two concepts are distinct. Specifically, to address a linear algebra problem posed by Wakeford \cite{Wakeford}, a bipartite graph $\mathcal{G} = (V(\mathcal{G}), E(\mathcal{G}))$ was constructed in \cite{Fan} based on two sets $A\subset \mathbb{Z}^n$ and $B\subset\mathbb{Z}^n$, as follows: Since $A$ and $B$ are possibly non-disjoint, we associate a symbol $x_a$ to each $a \in A$ and a symbol $y_b$ to each $b \in B$. We then define $X = \{x_a \mid a \in A\}$ and $Y = \{y_b \mid b \in B\}$. The nodes of $\mathcal{G}$ are given by the bipartition $V(\mathcal{G}) = X \cup Y$, and there is an edge $e(x_a, y_b) \in E(\mathcal{G})$ joining $x_a \in X$ to $y_b \in Y$ if and only if $a + b \notin A$. 
In graph theory, a \textit{matching} is a collection of edges where no two edges share a common node. A \textit{perfect matching} is a matching that covers all the nodes. For the bipartite graph associated with the pair of sets \( A \) and \( B \), perfect matchings correspond to bijections \( f : A \to B \) satisfying \( a + f(a) \notin A \) for all \( a \in A \). Wakeford's problem thus reduces to determining the existence of a specific type of matching, called an acyclic matching from \( A \) to \( B \).

Note that the classical definition of a perfect matching, from a purely graph-theoretical perspective, only requires that every vertex in the graph be incident to exactly one edge in the matching, essentially, all vertices are covered. In this setting, there is no bijection \( f: A \to B \), nor is the condition \( a + f(a) \notin A \) involved. However, matchings in groups adopt this bijective framework to tackle the linear algebra problem posed by Wakeford.

\subsection{Matchings in Matroids}

A matroidal version of results on matchings is presented in \cite{Zerbib 0}, where matchability is explored for two matroid classes: sparse paving matroids and transversal matroids. Additionally, a partial result concerning paving matroids is obtained. Building on this foundation, we extend the study of matchability to include the following matroid classes:

\begin{itemize}
    \item Panhandle matroids,
    \item Schubert matroids.
\end{itemize}

Our primary results pertaining to sparse paving matroids are Theorems \ref{paving} and \ref{paving general}. For panhandle and Schubert matroids, our main results are addressed in Theorems \ref{Asy Panhandle} and \ref{Asymmetric Schubert}.

We first introduce the concept of a matroid. A \textit{matroid} $M$ is a pair $(E, \mathcal{I})$ where $E = E(M)$ is a finite \textit{ground set} and $\mathcal{I}$ is a family of subsets of $E$, called \textit{independent sets}, satisfying the following conditions:
\begin{itemize}
    \item $\emptyset \in \mathcal{I}$.
    \item If $X \in \mathcal{I}$ and $Y \subset X$, then $Y \in \mathcal{I}$.
    \item \textit{Augmentation property}: If $X, Y \in \mathcal{I}$ and $|X| > |Y|$, then there exists $x \in X\setminus Y$ such that $Y \cup \{x\} \in \mathcal{I}$.
\end{itemize}

A maximal independent set is called a \textit{basis}. It follows from the augmentation property that all bases have the same size. The \textit{basis system} of $M$ is the collection of all bases of the matroid. An element in $E$ is called a \textit{loop} if it belongs to no basis.
A \textit{circuit} of $M$ is a minimal dependent set. The \textit{rank} of a subset $X \subseteq E$ is given by
\[ r_M(X) = r(X) = \max\{|X \cap I| : I \in \mathcal{I}\}. \]
The rank of a matroid $M$, denoted by $r(M)$, is defined as $r_M(E(M))$. A {{\em flat}} in  $M$ is a set $F \subset E(M)$ with the property that adjoining any new element to $F$ strictly increases its rank. A flat of rank $r(M)-1$ is called a {{\em hyperplane}}. \\ 
We say that two matroids $M=(E, \mathcal{I})$ and $N=(E', \mathcal{I'})$ are {\it{isomorphic}} if there is a bijection $\phi:E\rightarrow E'$ such that $X\in \mathcal{I}$ if and only if $\phi(X)\in \mathcal{I'}$. In this case, $\phi$ is called a {\it{matroid isomorphism}} and we use the notation $M\cong N$. The dual matroid \( M^* = (E, \mathcal{I}^*) \) is defined so that the bases in \( M^* \) are exactly the complements of the bases in \( M \). 
Given two matroids $M$ and $N$, their \emph{direct sum}, denoted by $M \oplus N$, is the matroid whose underlying set is the disjoint union of $E(M)$ and $E(N)$, and whose independent sets are the disjoint unions of an independent set of $M$ with an independent set of $N$.
\\

 Let $G$ be an abelian group. We say that $M = (E, \mathcal{I})$ is \textit{matroid over $G$} if $E$ is a subset of $G$. Following \cite{Zerbib 0} we define the concept of matchings in matroids over an abelian group. We assume that all matroids are loopless.

\begin{definition}\label{def, matching matroid}  Let  $(G,+)$ be an abelian group.
\begin{enumerate}
\item  Let $M$ and $N$ be two matroids over $G$ with $r(M)=r(N)=n>0$. Let $\mathcal{M}=\{a_1,\ldots,a_n\}$ and $\mathcal{N}=\{b_1,\ldots,b_n\}$ be ordered bases of $M$ and $N$, respectively. We say $\mathcal{M}$ is {{\em matched}} to $\mathcal{N}$ if $a_i+b_i\notin E(M)$, for all $1\leq i\leq n$. 
    
    \item We say that $M$ is {{\em matched}}  to $N$ if for every basis $\mathcal{M}$ of $M$ there exists a basis $\mathcal{N}$ of $N$ so that $\mathcal{M}$ is matched to $\mathcal{N}$.

 \end{enumerate}
\end{definition}

\textbf{Matchings in abelian groups vs. matchings in matroids.}  
\begin{itemize}
    \item Matchability in the group setting can be viewed as a special case of matchability in the matroid setting. Let \( A \) and \( B \) be two finite nonempty subsets of an abelian group \( G \), each of cardinality \( n \), and assume that \( 0 \notin B \). Define a uniform matroid \( M \cong U_{n,n} \) on the ground set \( A \), and similarly define a uniform matroid \( N \cong U_{n,n} \) on the ground set \( B \). Then \( A \) can be matched to \( B \) in the group-theoretic sense if and only if \( M \) can be matched to \( N \) in the matroid-theoretic sense.
Note that uniform matroids form a very small class of all matroids, and \( U_{n,n} \) is a particularly special case among them.

    \item Borrowing notations from Definition \ref{def, matching matroid}, suppose that \( \mathcal{M} \) is matched to \( \mathcal{N} \) as matroid bases. Then for each \( 1 \leq i \leq n \), we have \( a_i + b_i \notin E(M) \), and in particular, \( a_i + b_i \notin \mathcal{M} \). It follows that the map \( a_i \mapsto b_i \) defines a matching in the group-theoretic sense between the subsets \( \mathcal{M} \) and \( \mathcal{N} \) of \( G \). In this way, our definition of matchings in matroids is compatible with the classical notion of matchings in abelian groups.\\
\end{itemize}
\textbf{Compatibility of matchings in matroids with matchings in the vector space setting.} Note that Definition \ref{def, matching matroid} is also consistent with the notion of matchability in vector spaces over field extensions, as defined by Eliahou and Lecouvey \cite{Eliahou 2}.  Let \( K \subset F \) be a field extension, and let \( A, B \subset F \) be \( n \)-dimensional \( K \)-subspaces of \( F \). Suppose \( \mathcal{A} = \{a_1, \dots, a_n\} \) and \( \mathcal{B} = \{b_1, \dots, b_n\} \) are ordered bases of \( A \) and \( B \), respectively. Then \( \mathcal{A} \) is said to be matched to \( \mathcal{B} \) if
\begin{align}\label{match vector}
    a_i^{-1} A \cap B \subset \langle b_1, \dots, \hat{b}_i, \dots, b_n \rangle \quad \text{for every } 1 \leq i \leq n,
\end{align}

where \( \langle b_1, \dots, \hat{b}_i, \dots, b_n \rangle \) denotes the \( K \)-vector space spanned by \( \mathcal{B} \setminus \{b_i\} \). A vector space \(A \) is matched to $B$ if every basis of \( \mathcal{A} \) of $A$ is matched to a basis of \( \mathcal{B} \) of $B$ in this way.

Now let \( G = F \setminus \{0\} \) be the multiplicative group of \( F \), and define the matroids over \( G \) by \( M = (A \setminus \{0\}, \mathcal{I}) \) and \( N = (B \setminus \{0\}, \mathcal{I}') \), where \( \mathcal{I} \) and \( \mathcal{I}' \) are the families of linearly independent subsets of \( A \) and \( B \), respectively. Then the ranks satisfy
\[
r(M) = \dim_K A = n = \dim_K B = r(N).
\]
Suppose the basis \( \mathcal{A} \) is matched to the basis \( \mathcal{B} \) in the vector space setting. It follows from condition (\ref{match vector}) that \( a_i b_i \notin A \) for all \( i \), and hence \( A \) is also matched to \( B \) in the matroid-theoretic sense.

\section{Preliminaries}\label{sec:prelim}
In this section, we aim to familiarize the reader with essential terms, basic theory of
certain total orders in abelian groups, and matching theory, providing a necessary
background for understanding this paper.

\subsection{Total orders in abelian groups}\label{total order}

 For the remainder of this paper, we adopt the following notation. Given an abelian group \( G \) and two subsets \( A \) and \( B \) of \( G \), we define the \emph{sumset} of \( A \) and \( B \) by
\[
A + B = \{ a + b : a \in A,\, b \in B \}.
\]
If \( A = \{a\} \) is a singleton, then we simply write \( A + B = a + B \).

Given a subset \( A \subset G \), a total order \( \preceq \) on \( A \) is said to be \emph{compatible with the group structure} if, for every \( a, b, c \in A \), the relation \( a \preceq b \) implies \( a + c \preceq b + c \).

We say that a non-zero element $x \in G$ is {\it positive} if $0 \preceq x$, and {\it negative} if $x \preceq 0$. For every $a$ and $b$ in $G$, if $a \preceq b$ and $a \neq b$, we use the notation $a \prec b$.

A subset $A$ of $G$ is called {\em positive} ({\em negative}) if every $x \in A$ is positive (negative). In this case, we write $0 \prec A$ ($A \prec 0$). If $A$ is a finite subset of $G$, we say that $x \in A$ is the {\em maximum} ({\em minimum}) element of $A$ if $a \preceq x$ ($x \preceq a$) for every $a \in A$, and denote it by $\text{max}(A)$ ($\text{min}(A)$).

Additionally, if $A \subset G$, then $A^+ = \{a \in A \mid 0 \prec a\}$. The set $A^-$ is defined similarly.

 In \cite{Levi} Levi proved the following:
\begin{theorem}\cite{Levi} \label{linearly ordered groups}
An abelian group $G$ has a total order that is compatible with the group structure if and only if it is torsion-free.
\end{theorem}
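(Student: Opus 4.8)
The plan is to treat the two implications separately; necessity is a one-line argument and all the substance lies in sufficiency. For necessity, suppose $\preceq$ is a total order on $G$ compatible with the group structure and let $x \in G$ be nonzero; after replacing $x$ by $-x$ if needed we may assume $0 \prec x$. Adding $x$ to both sides repeatedly and using compatibility gives $0 \prec x \prec 2x \prec 3x \prec \cdots$, so $kx \neq 0$ for every $k \geq 1$, whence $x$ has infinite order and $G$ is torsion-free.

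For sufficiency, assume $G$ is torsion-free. First I would embed $G$ into a $\mathbb{Q}$-vector space $V$ — for instance $V = \mathbb{Q} \otimes_{\mathbb{Z}} G$, or the divisible hull of $G$ — the point being that torsion-freeness is precisely what makes the canonical map $\iota \colon G \to V$ injective, so that it suffices to equip $V$ with a compatible total order and pull it back along $\iota$. To order $V$, use the axiom of choice to pick a $\mathbb{Q}$-basis $(e_i)_{i \in I}$ of $V$ with $I$ well-ordered. Each nonzero $v \in V$ then has a unique expansion $v = \sum_{i \in I} q_i e_i$ with finite support; let $i(v)$ denote the least index in the support and call $v$ \emph{positive} when $q_{i(v)} > 0$. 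Declaring $u \prec w$ to mean that $w - u$ is positive gives the desired lexicographic order on $V$, and its restriction to $\iota(G) \cong G$ is again a compatible total order.

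The only steps requiring care are: (i) the injectivity of $\iota$, which is exactly where the torsion-free hypothesis is used and amounts to identifying $\ker \iota$ with the torsion subgroup of $G$; (ii) the well-definedness of the leading index $i(v)$, which needs the support of $v$ to be finite (automatic in a vector space) and $I$ to be well-ordered so the support has a least element; and (iii) checking that $\prec$ is a total order compatible with $+$ — trichotomy holds because a nonzero vector is positive exactly when its negative is not, compatibility is immediate since $(w+z) - (u+z) = w - u$, and transitivity reduces to showing a sum of two positive vectors is positive, which one verifies by comparing leading indices (if they are equal the leading coefficients add and remain positive; if not, the smaller index dominates). I do not expect any genuine obstacle here: the proof is essentially "choose a basis" — a disguised appeal to Zorn's lemma — followed by a routine lexicographic construction, with the passage back to $G$ costing nothing once $\iota$ is known to be injective.
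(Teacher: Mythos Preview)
Your argument is correct: the necessity direction is the standard one-line observation that a compatible total order forces every nonzero element to generate a strictly increasing sequence $0 \prec x \prec 2x \prec \cdots$, and for sufficiency your embedding of $G$ into its divisible hull $\mathbb{Q}\otimes_{\mathbb{Z}} G$ followed by a lexicographic order with respect to a well-ordered $\mathbb{Q}$-basis is a clean and complete construction. The three points you flag as requiring care --- injectivity of $\iota$ (equivalent to torsion-freeness), well-definedness of the leading index, and closure of the positive cone under addition --- are all handled correctly.

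There is, however, nothing to compare against: the paper does not supply its own proof of this theorem. It is stated with a citation to Levi~\cite{Levi} and used as a black box (see the sentence ``In \cite{Levi} Levi proved the following'' immediately preceding the statement). So your proposal is not an alternative to the paper's proof but rather a self-contained proof of a result the paper merely quotes.
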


When $G$ is not torsion-free, it is possible to equip sufficiently small subsets of $G$ with a total order that is compatible with the group structure of $G$. This can be achieved by utilizing a \textit{rectification principle} due to Lev \cite{Lev}, which asserts that a sufficiently small subset of $G$ can be embedded in the integers while preserving certain additive properties. Utilizing this tool leads to the following result proved in \cite{Zerbib 0}. 
Given an abelian group $G$, denote the smallest cardinality of a non-zero finite subgroup of $G$ by $p(G)$. If $G$ has no finite non-trivial subgroup, we assume that $p(G) = \infty$.

\begin{theorem}\label{rectification}
Let $G$ be an abelian group with the non-trivial torsion subgroup. Let $p=p(G)$ and $A$ be a subset of $G$ with $|A|\leq \lceil{\log_2 p\rceil}$. Then there exists a total ordering 
 $\preceq$ on $A$ that is compatible with  $G$. 
\end{theorem}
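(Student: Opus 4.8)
The plan is to transport the usual order of $\mathbb{Z}$ back to $A$ along an embedding of $A$ into $\mathbb{Z}$ that preserves enough of the additive structure; the device that produces such an embedding is precisely Lev's rectification principle. First, since $|A|\le\lceil\log_2 p(G)\rceil$, I would invoke Lev's theorem \cite{Lev} to obtain an injection $\phi\colon A\to\mathbb{Z}$ which is a Freiman isomorphism of order $2$ from $A$ onto a set of integers; concretely, for all $x,y,z,w\in A$ one has $x+y=z+w$ in $G$ if and only if $\phi(x)+\phi(y)=\phi(z)+\phi(w)$ in $\mathbb{Z}$. (Only the forward implication, namely that an additive quadruple contained in $A$ is carried to an additive quadruple, will be needed.) This is the step in which the hypothesis $|A|\le\lceil\log_2 p(G)\rceil$ is spent, and it is the one genuinely substantive point in the argument: for a set this small one must preclude any additive coincidence arising from the torsion part of $\langle A\rangle$ that is not already present among the integers, and the stated size bound is exactly what guarantees this. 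I would cite this rather than reprove it, which is also the route taken in \cite{Zerbib 0}.

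Granting $\phi$, I would define $\preceq$ on $A$ by declaring $a\preceq b$ if and only if $\phi(a)\le\phi(b)$; since $\phi$ is injective and $(\mathbb{Z},\le)$ is totally ordered, this is a total order on $A$, and it remains only to check compatibility with the group structure of $G$. So let $a,b,c\in A$ with $a\preceq b$, and assume moreover that $a+c\in A$ and $b+c\in A$, so that the inequality $a+c\preceq b+c$ is meaningful. Put $d=a+c$ and $e=b+c$; by assumption $d,e\in A$. Then $b+d=a+b+c=a+e$, so $(b,d,a,e)$ is an additive quadruple lying entirely in $A$, whence $\phi(b)+\phi(d)=\phi(a)+\phi(e)$, that is, $\phi(e)-\phi(d)=\phi(b)-\phi(a)\ge 0$. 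Therefore $\phi(d)\le\phi(e)$, i.e. $a+c\preceq b+c$, which is exactly the required compatibility.

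To summarize the division of labor: the hard content is packaged into the existence of the order-$2$ rectifying map $\phi$ (Lev's principle), and once that is in hand the theorem follows from the one-line pullback of the order together with the identity $b+(a+c)=a+(b+c)$ used above. Beyond quoting Lev's principle correctly — in the form that yields an injective Freiman $2$-homomorphism on a subset of size $\lceil\log_2 p(G)\rceil$ — I foresee no obstacle.
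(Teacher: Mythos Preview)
Your proposal is correct and matches what the paper does: the paper does not actually prove Theorem~\ref{rectification} here but simply cites it from \cite{Zerbib 0}, having already signaled in the preceding paragraph that the argument goes through Lev's rectification principle \cite{Lev}. Your route---invoke Lev to obtain a Freiman $2$-isomorphism $\phi\colon A\to\mathbb{Z}$, pull back the usual order along $\phi$, and verify translation-compatibility via the additive-quadruple identity $b+(a+c)=a+(b+c)$---is precisely how one fills in that citation, so there is nothing to add or correct.
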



\subsection{Matching theory}

We borrow the following theorem from \cite{Aliabadi 4}, which addresses the matchability of sufficiently small subsets in the group setting. 

\begin{theorem}\label{matchable sets, p(G)}

Let $G$ be an abelian group and $A$ and $B$ be finite subsets of $G$ with $|A|=|B|=n<p(G)$, and $0\notin B$. Then there is a matching in the group sense from $A$ to $B$.  
\end{theorem}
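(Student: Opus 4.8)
The plan is to realize a matching from $A$ to $B$ as a perfect matching of the associated bipartite graph and to verify Hall's condition by means of Kneser's addition theorem. Form the bipartite graph on two vertex classes indexed by $A$ and $B$, joining $a\in A$ to $b\in B$ exactly when $a+b\notin A$ (this is the graph $\mathcal G$ described in the introduction). A matching from $A$ to $B$ in the group sense is precisely a perfect matching of this graph, so by Hall's theorem it suffices to check that $|N(S)|\ge |S|$ for every nonempty $S\subseteq A$, where $N(S)=\{b\in B : S+b\not\subseteq A\}$. Since $|A|=|B|=n$, writing $T_S=B\setminus N(S)=\{b\in B : S+b\subseteq A\}$, this is equivalent to the inequality
\[
|T_S|\le n-|S|\qquad\text{for every nonempty }S\subseteq A .
\]

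To establish this I would argue as follows. If $T_S=\emptyset$ there is nothing to prove, so fix $b_0\in T_S$ and translate: put $A_1=S+b_0$ and $T_1=T_S-b_0$, so that $|A_1|=|S|$, $|T_1|=|T_S|$, $0\in T_1$, and $A_1+T_1=S+T_S\subseteq A$; in particular $|A_1+T_1|\le n<p(G)$. The stabilizer $H=\{g\in G : g+(A_1+T_1)=A_1+T_1\}$ is finite, and $A_1+T_1$ is a union of $H$-cosets, so $|H|$ divides $|A_1+T_1|<p(G)$; by the definition of $p(G)$ this forces $H=\{0\}$. Kneser's theorem then yields $|A_1+T_1|\ge |A_1|+|T_1|-1=|S|+|T_S|-1$, hence $|T_S|\le n-|S|+1$. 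It remains to exclude the boundary case $|T_S|=n-|S|+1$: there $|A_1+T_1|=n=|A|$ forces $A_1+T_1=A$, and Kneser's bound is attained with trivial period, so the equality case (Kemperman's structure theorem) applies. If $|A_1|=1$, say $S=\{a\}$, then $A_1+T_1=A$ gives $a+B=A$, and since $a\in A$ we get $0\in B$; if $|T_1|=1$ then $T_1=\{0\}$ forces $S=A$ and $b_0\in\mathrm{Stab}(A)$, a subgroup of order dividing $n<p(G)$, hence $b_0=0$; in the remaining case $|A_1|,|T_1|\ge 2$, after discarding the exceptional configurations governed by a nontrivial subgroup (impossible since $|A_1+T_1|<p(G)$) and the Vosper-type extremal case $|A_1+T_1|=|G|-1$ (which forces $G$ to have prime order and $B=G\setminus\{0\}$, where the bijection $a\mapsto\alpha-a$ with $A=G\setminus\{\alpha\}$ is a matching), the structure theorem shows that $A_1$ and $T_1$ are arithmetic progressions with a common difference $d$; a direct bookkeeping with these progressions then shows $-b_0\in T_1$, i.e. $0\in T_S\subseteq B$. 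Each outcome contradicts $0\notin B$, so the displayed inequality holds, Hall's condition is verified, and the desired matching exists.

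I expect the main obstacle to be the boundary case $|T_S|=n-|S|+1$: one must invoke the equality case of Kneser's inequality in an arbitrary abelian group and use the hypothesis $n<p(G)$ carefully to rule out the structure-theoretic exceptions — both those controlled by a nontrivial subgroup and the Vosper-type extremal case — before the arithmetic-progression description of $A_1$ and $T_1$ becomes available. Once that description is in hand, the computation producing $0\in T_S$ is routine, and the set-up (bipartite graph, the Hall reformulation, and the triviality of the stabilizer) is entirely standard.
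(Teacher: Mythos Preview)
The paper does not supply its own proof of this statement; it is quoted verbatim from \cite{Aliabadi 4} (introduced with ``We borrow the following theorem from \cite{Aliabadi 4}''), so there is no in-paper argument to compare against.

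Your strategy---Hall's marriage theorem on the associated bipartite graph, with Hall's condition verified via Kneser's theorem---is the standard one and is essentially how the result is proved in the literature. The reduction to showing $|T_S|\le n-|S|$, the triviality of the stabilizer from $|A_1+T_1|<p(G)$, the Kneser bound $|T_S|\le n-|S|+1$, and your treatment of the degenerate boundary cases $|S|=1$ and $|T_S|=1$ are all correct. The soft spot is your invocation of Kemperman's structure theorem in the remaining boundary case: that classification is recursive, and ``discarding the exceptional configurations governed by a nontrivial subgroup'' is not a single step but requires walking through the Kemperman decomposition and using $|A_1+T_1|<p(G)$ at each level to kill every quasi-periodic piece. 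Once one has established that $A_1$ and $T_1$ are arithmetic progressions with common difference $d$ and $A_1+T_1=A$, the set $\{b\in G: S+b\subseteq A\}$ is itself an arithmetic progression of length $n-|S|+1$ containing $0$, so $|T_S|=n-|S|+1$ would force $0\in T_S\subseteq B$; that is the ``bookkeeping'' you allude to, and it is correct but worth writing out. Your Vosper-type detour (directly exhibiting the matching $a\mapsto\alpha-a$ when $G$ has prime order and $n=|G|-1$) is valid and genuinely necessary, since in that regime $A_1$ and $T_1$ need not be progressions; it sits a little awkwardly inside a Hall-condition verification, but the logic is sound.
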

The following theorem provides a necessary and sufficient condition for matching subsets of an abelian group in the symmetric case:
\begin{theorem}\cite{Losonczy}\label{symmetric matching}
Let $G$ be an abelian group and let $A$ be a nonempty finite subset of $G$.
Then there is a matching from $A$ to itself if and only if $0 \notin A$.
\end{theorem}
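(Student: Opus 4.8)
The necessity of $0\notin A$ is immediate: if $0\in A$ then for every bijection $f\colon A\to A$ the element $a=0$ fails the matching condition, since $a+f(a)=f(0)\in A$. For sufficiency, the plan is to deduce the existence of a matching from Hall's marriage theorem. Let $\mathcal G$ be the bipartite graph on vertex classes $X=\{x_a : a\in A\}$ and $Y=\{y_a : a\in A\}$ (two labelled copies of $A$), with $x_a$ joined to $y_b$ exactly when $a+b\notin A$. A matching $f\colon A\to A$ is precisely a perfect matching of $\mathcal G$, so it suffices to verify Hall's condition $|N_{\mathcal G}(S)|\ge|S|$ for every $S\subseteq A$, where we identify a subset of $X$ with the corresponding subset of $A$.

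I would first translate Hall's condition into additive language. A copy $y_b$ fails to be a neighbour of $S$ exactly when $a+b\in A$ for all $a\in S$, that is, when $S+b\subseteq A$; hence $N_{\mathcal G}(S)=A\setminus T_S$ where $T_S=\{b\in A : S+b\subseteq A\}$ is the largest subset $T\subseteq A$ with $S+T\subseteq A$. Therefore Hall's condition is equivalent to the following single statement: for all finite $S,T\subseteq A$ with $S+T\subseteq A$ one has $|S|+|T|\le|A|$. This is the combinatorial core of the theorem, and it is precisely here that the hypothesis $0\notin A$ must enter.

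To prove this inequality I would first reduce to the case of a finite group: replace $G$ by the finitely generated group $\langle A\rangle$, and then reduce its free (integer) coordinates modulo a sufficiently large $M$; since $A$ and $S+T$ occupy only a bounded box, this produces a faithful copy of $A$, of $S+T$, and of all the membership relations in play, inside a finite group, while preserving $0\notin A$. Assuming now $G$ finite, argue by induction on $|G|$. Given nonempty $S,T\subseteq A$ with $S+T\subseteq A$, set $B=(S\cup\{0\})+T=(S+T)\cup T\subseteq A$; since $0\notin S+T$ and $0\notin T$, we have $0\notin B$. Apply Kneser's theorem to the pair $S\cup\{0\}$, $T$, and let $H$ be the stabiliser of $B$. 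If $H$ is trivial, Kneser's bound gives $|A|\ge|B|\ge|S\cup\{0\}|+|T|-1=|S|+|T|$ (using $0\notin S$), and we are done.

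The case $H\neq\{0\}$ is where I expect the real difficulty. Here $B$ is a nonempty union of $H$-cosets avoiding $0$, so it is disjoint from the coset $H$, and one passes to $\bar G=G/H$, in which $\bar B=(\bar S\cup\{\bar 0\})+\bar T$ has trivial stabiliser and does not contain $\bar 0$. Two features obstruct a naive induction: $A$ need not be $H$-periodic, so $|A|$ is only bounded below by $|\bar A|$ and not by $|H|\cdot|\bar A|$; and $S$ may meet $H$, so $\bar 0$ may lie in $\bar S$, hence in $\bar A$, which blocks a direct appeal to the inductive hypothesis on $\bar A$. Feeding the full Kneser bound $|B|\ge|(S\cup\{0\})+H|+|T+H|-|H|$ back in still leaves a gap of size up to $|H|$. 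To close it I would either pass to a minimal counterexample (so that $A=S\cup T\cup(S+T)$) and keep track of exactly which $H$-cosets inside $A$ are full, or invoke the structure theory of sets with minimal sumset (Vosper/Kemperman: in the extremal regime $S$ and $T$ are arithmetic progressions with a common difference, or essentially cosets of a subgroup) and check directly that any configuration with $S,T\subseteq A$ and $S+T\subseteq A$ of that shape forces $0$ into the progression or coset, hence into $A$, contradicting the hypothesis. This last verification is the crux of the argument; the rest is routine bookkeeping.
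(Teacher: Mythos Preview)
The paper does not give its own proof of this theorem; it is quoted from Losonczy \cite{Losonczy} as a black box in the preliminaries, so there is no in-paper argument to compare against. Your reduction is nonetheless the standard one and matches Losonczy's strategy: pass to Hall's condition on the bipartite graph, and rewrite it as the additive inequality
\[
S,T\subseteq A,\quad S+T\subseteq A,\quad 0\notin A\ \Longrightarrow\ |S|+|T|\le|A|.
\]
Your handling of the trivial-stabiliser case via Kneser applied to $(S\cup\{0\})+T$ is correct.

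The gap is exactly where you say it is: the non-trivial stabiliser case is left as a plan, not a proof. You correctly diagnose the obstacles (neither $A$ nor $S$ need be $H$-periodic, so quotienting by $H$ may place $\bar 0$ in $\bar A$), but the two remedies you sketch --- a minimal-counterexample bookkeeping of full cosets, or the Vosper/Kemperman structure theory --- are respectively too vague and far heavier than necessary, and in neither case do you actually carry out ``the crux.'' As written, the argument is incomplete.

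A cleaner way to finish, which avoids Kneser entirely, is the Dyson $e$-transform. First reduce, by taking $e=0$, to the case $T\subseteq S$. Then for any $e\in S$ set $S'=S\cup(T+e)$ and $T'=T\cap(S-e)$; one checks that $S',T'\subseteq A$, $S'+T'\subseteq S+T\subseteq A$, and $|S'|+|T'|=|S|+|T|$. Iterate, choosing at each step some $e\in S$ with $T+e\not\subseteq S$; this exists unless $S+T\subseteq S$, which forces $S$ to be a union of $\langle T\rangle$-cosets and hence (via any $t\in T\subseteq S$) $0\in\langle T\rangle\subseteq S\subseteq A$, a contradiction. Since each step strictly shrinks $T$, eventually $T'=\emptyset$ and $|S|+|T|=|S'|\le|A|$.
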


 In the next two sections, we will state and prove our main results. Throughout, we may assume that  $M = (E, \mathcal{I})$ and $N=(E', \mathcal{I'})$ are two matroids over an abelian group $(G,+)$ with neutral element $0$. Any unexplained matroid terminology used here will follow Oxley \cite{Oxley}. We assume that $[n]=\{1,2,\cdots,n\}$ for every positive integer $n$.\\
\section{Sparse paving matroids}

 A matroid $M=(E,\I)$ of rank $n$ is said to be a {{\em paving matroid}} if all of its circuits are of size at least $n$. If \( M \) and \( M^* \) are both paving matroids, then \( M \) is called a {{\em sparse paving matroid}}.
For integers $0\leq n \leq m$, let $E$ be an $m$-element set and let $\mathcal{I}$ be the collection of subsets of $E$ with at most $n$ elements. Then $M=(E, \mathcal{I}) $ is a matroid with rank $n$, called the {\em uniform matroid} and denoted by  $U_{n,m}$. Uniform matroids are a very special class of sparse paving matroids.



A {\it{$d$-partition}} of a set $E$ is a collection $\mathcal{S}$ of subsets of $E$, all of which are of size at least $d$ so that 
every $d$-subset of $E$ is contained in exactly one of the sets in  $\mathcal{S}$.
 The set $\mathcal{S} = \{E\}$ is a $d$-partition for any $d$, which we call a {\it{ trivial
$d$-partition.}}
The connection between paving matroids and d-partitions is given by the following result whose proof can be found in \cite{Oxley}.
\begin{theorem} \cite{Oxley}\label{(n-1) partitions}
If $M$ is a paving matroid of rank $n\geq 2$, then its hyperplanes form
a non-trivial $(n-1)$-partition of $E(M)$. That is, the intersection of any two hyperplanes in $M$ is of size at most $n-2$.
\end{theorem}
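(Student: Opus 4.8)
The plan is to check directly the three features defining a non-trivial $(n-1)$-partition for the family $\mathcal{H}$ of hyperplanes of $M$: every member has at least $n-1$ elements; every $(n-1)$-element subset of $E(M)$ lies in exactly one member of $\mathcal{H}$; and $\mathcal{H}\neq\{E(M)\}$. The starting observation is the basic consequence of the paving hypothesis: since every circuit of $M$ has size at least $n$, no set of size at most $n-1$ can contain a circuit, so every subset of $E(M)$ of size at most $n-1$ is independent. In particular every $(n-1)$-element subset $X$ is independent, hence $r(X)=n-1$. The size bound for hyperplanes is then immediate, since a hyperplane $H$ has rank $n-1$ and therefore contains an independent set with $n-1$ elements, so $|H|\geq n-1$.

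For the ``exactly one'' statement, fix an $(n-1)$-subset $X$. Its closure $\mathrm{cl}(X)$ is a flat with $r(\mathrm{cl}(X))=r(X)=n-1$, that is, a hyperplane containing $X$; this gives existence. For uniqueness, suppose $X\subseteq H_1$ and $X\subseteq H_2$ for hyperplanes $H_1,H_2$. Then $H_1\cap H_2$ is again a flat (an intersection of flats is a flat) and it contains $X$, so $r(H_1\cap H_2)\geq r(X)=n-1$; on the other hand $H_1\cap H_2\subseteq H_1$ forces $r(H_1\cap H_2)\leq r(H_1)=n-1$. Thus $H_1\cap H_2$ is a flat of rank $n-1$ sitting inside the rank-$(n-1)$ flat $H_1$. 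Since adjoining to a flat any element not already in it strictly increases the rank, a flat cannot properly contain another flat of the same rank, so $H_1\cap H_2=H_1$, and symmetrically $H_1\cap H_2=H_2$; hence $H_1=H_2$.

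It remains to note non-triviality: $E(M)$ has rank $n$, not $n-1$, so $E(M)$ is not a hyperplane, whence $\mathcal{H}\neq\{E(M)\}$ and the $(n-1)$-partition is non-trivial. The displayed reformulation then follows for free: if two hyperplanes satisfied $|H_1\cap H_2|\geq n-1$, we could pick an $(n-1)$-subset inside $H_1\cap H_2$, and the uniqueness just proved would force $H_1=H_2$; so distinct hyperplanes meet in at most $n-2$ elements.

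I do not anticipate a real obstacle here, as this is a standard structural fact; the one place demanding a little care is the uniqueness argument, which uses both that an intersection of flats is a flat and that a flat is rank-maximal among the subsets it contains. Everything else is bookkeeping with the rank function together with the single consequence of the paving hypothesis isolated at the outset.
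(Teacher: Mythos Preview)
Your argument is correct and is essentially the standard proof one finds in matroid theory (and presumably in Oxley). Note, however, that the paper itself does not give a proof of this statement: it is quoted as a known result with the line ``whose proof can be found in \cite{Oxley},'' so there is no in-paper proof to compare against.
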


In the following theorem, we provide a sufficient condition for matchability of sparse paving matroids over an abelian torsion-free group.
\begin{theorem}\label{paving}
Let $G$ be a torsion-free group equipped with a total order $\preceq$ compatible with its group structure. Let $M$ and $N$ be two matroids over $G$ both of the same rank $n$. Assume further that all the following conditions hold:
\begin{enumerate}
\item
$E(M)$ and $E(N)$ are both positive,
\item
$N$ is a sparse paving matroid,
\item
$|E(M)|=|E(N)|=n+1$,
\item
$x\preceq ny$, where $x=\max(E(M))$, $y=\min(E(N))$ and $ny=y+\underbrace{\cdots}_{n-\text{times}}+y.$
\end{enumerate}
Then $M$ is matched to $N$.
\end{theorem}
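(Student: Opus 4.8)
The plan is to fix an arbitrary basis $\mathcal{M}$ of $M$, write $E(M)=\{d_1\prec\cdots\prec d_{n+1}\}$ and $\mathcal{M}=E(M)\setminus\{a_0\}$, and to exhibit a basis $\mathcal{N}$ of $N$ together with a bijection $\mathcal{M}\to\mathcal{N}$ all of whose paired sums avoid $E(M)$; by Definition~\ref{def, matching matroid} this is exactly what it means for $M$ to be matched to $N$. The first step is to record the structure of $N$. Since $N$ is (sparse) paving and $|E(N)|=n+1=r(N)+1$, every dependent $n$-subset of $N$ is a circuit, and it is moreover a hyperplane (it has rank $n-1$, and adjoining the single remaining element raises the rank to $n$). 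As any two such $n$-subsets meet in exactly $n-1$ elements, \Cref{(n-1) partitions} forces $N$ to have at most one non-basis (when $n=1$, looplessness already gives $N\cong U_{1,2}$). Hence either $N\cong U_{n,n+1}$, or $N$ has a unique circuit-hyperplane $H^\ast=E(N)\setminus\{c^\ast\}$, in which case $H^\ast$ has rank $n-1<n$, so every basis of $N$ contains $c^\ast$.

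Next I form the bipartite graph $\Gamma$ with colour classes $\mathcal{M}$ and $E(N)$, joining $a$ to $c$ exactly when $a+c\notin E(M)$, and check Hall's condition for saturating $\mathcal{M}$. For $S\subseteq\mathcal{M}$, set $\overline{N}(S)=\{c\in E(N):S+c\subseteq E(M)\}$; it suffices to show $|\overline{N}(S)|\le (n+1)-|S|$, since then the neighbourhood $N(S)$ of $S$ in $\Gamma$ has size $(n+1)-|\overline{N}(S)|\ge|S|$. Put $s_0=\min S=d_m$ with $m\ge 1$. For $c\in\overline{N}(S)$, positivity of $E(N)$ gives $s_0+c\succ s_0=d_m$, while $s_0+c=\min(S+c)$ is the least element of a $|S|$-subset of $E(M)$ and so lies among the $n+2-|S|$ smallest elements of $E(M)$; therefore $s_0+c\in\{d_{m+1},\dots,d_{n+2-|S|}\}$, a set of at most $(n+1)-|S|$ elements because $m\ge 1$. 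Since $c\mapsto s_0+c$ is injective on $\overline{N}(S)$, the bound follows, and Hall's theorem produces an injection $\phi\colon\mathcal{M}\to E(N)$ with $a+\phi(a)\notin E(M)$ for every $a\in\mathcal{M}$. Its image $\mathcal{N}:=\phi(\mathcal{M})$ is an $n$-subset of $E(N)$ omitting exactly one element.

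It remains to arrange that $\mathcal{N}$ is a basis of $N$. If $N\cong U_{n,n+1}$ we are done, and if the element omitted by $\phi$ lies in $H^\ast$ then $\mathcal{N}=E(N)\setminus\{h\}$ with $h\in H^\ast$ is a basis and we are done; thus we may assume $N$ has the circuit-hyperplane $H^\ast$ and $\mathcal{N}=H^\ast$. If some $a^\ast\in\mathcal{M}$ satisfies $a^\ast+c^\ast\notin E(M)$, then re-assigning $a^\ast\mapsto c^\ast$ gives an injection $\phi'$ still compatible with $\Gamma$ whose image is $E(N)\setminus\{\phi(a^\ast)\}$, a basis because $\phi(a^\ast)\in H^\ast$. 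The remaining possibility is $\mathcal{M}+c^\ast\subseteq E(M)$, and I claim hypothesis~(4) forbids it: then $\mathcal{M}+c^\ast=(E(M)+c^\ast)\setminus\{a_0+c^\ast\}$ is an $n$-subset of $E(M)$, so $E(M)+c^\ast=(E(M)\setminus\{d\})\cup\{a_0+c^\ast\}$ for some $d\in E(M)$; comparing maxima and using $c^\ast\succ 0$ forces $x+c^\ast=a_0+c^\ast$, hence $a_0=x=d_{n+1}$ and $\mathcal{M}=\{d_1,\dots,d_n\}$. Then the increasing list $d_1+c^\ast\prec\cdots\prec d_n+c^\ast$ must coincide with $d_2\prec\cdots\prec d_{n+1}$, so $E(M)$ is an arithmetic progression of common difference $c^\ast$ and $x=d_1+nc^\ast$. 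Since $d_1\succ 0$ and $c^\ast\succeq\min(E(N))=y$, this gives $x=d_1+nc^\ast\succ nc^\ast\succeq ny$, contradicting~(4); so this case cannot arise, and the proof is complete.

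The step I expect to be the crux is the last one. Producing the Hall matching is routine and uses only positivity of $E(N)$, but its image need not be a basis of $N$; reconciling this with the structure of $N$ is where hypotheses~(2) and~(4) genuinely enter — the former ensuring there is at most one forbidden $n$-subset, the latter precisely calibrated to exclude the arithmetic-progression configuration that would otherwise block the single available repair move.
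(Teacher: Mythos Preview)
Your proof is correct and follows the same three-step skeleton as the paper --- obtain a group-sense matching of $\mathcal{M}$ into $E(N)$, observe that if its image is not a basis of $N$ it must be the unique circuit-hyperplane, then repair by swapping in the missing element --- but you implement two of the steps differently. First, the paper obtains the initial matching by citing the group-matching result (Theorem~\ref{matchable sets, p(G)}) to get a bijection $f\colon E(M)\to E(N)$ and then restricts to $\mathcal{M}$; you instead verify Hall's condition for $\mathcal{M}$ against $E(N)$ directly via the ordering argument on $s_0+c$. Your route is self-contained and does not appeal to an external theorem, at the cost of a short extra computation. Second, in the final contradiction the paper uses a sum comparison: from $\mathcal{M}+b_{n+1}\subsetneq E(M)$ and positivity one gets $\sum_{i=1}^n(a_i+b_{n+1})\prec\sum_{i=1}^{n+1}a_i$, hence $nb_{n+1}\prec a_{n+1}\preceq x$, contradicting~(4). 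You instead extract the arithmetic-progression structure $d_{i+1}=d_i+c^\ast$ and conclude $x=d_1+nc^\ast\succ nc^\ast\succeq ny$. The paper's sum argument is shorter; your structural analysis is more explicit about why the obstruction is rigid. Either way the logic is sound.
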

\begin{proof}
The proof is straightforward in case $n=1$. So we assume that $n>1$.
Since $|E(M)|=|E(N)|$ and $0\notin E(N)$, it follows from Theorem \ref{matchable sets, p(G)} that there exists a matching  $f:E(M)\to E(N)$ in the group sense. Let $E(M)=\{a_1,\ldots,a_{n+1}\}$. Let $\mathcal{M}=\{a_1,\ldots,a_n\}$ be a basis for $M$. Set $b_i=f(a_i)$, $1\leq i\leq n$, and $\mathcal{N}=\{b_1,\ldots,b_n\}$. If $\mathcal{N}$ is a basis for $N$, then $\mathcal{M}$ is matched to $\mathcal{N}$ in the matroid sense, and we are done. So suppose $\mathcal{N}$ is not a basis for $N$. Since $|E(N)|=n+1$ and $|\mathcal{N}|=n$, then $\mathcal{N}$ is a hyperplane. Define $\mathcal{N}_i=\left(\mathcal{N}\setminus\{b_i\}\right)\cup \{b_{n+1}\}$, where $\{b_{n+1}\}=E(N)\setminus \mathcal{N}$. Note that $|\mathcal{N}_i\cap \mathcal{N}|=n-1$. Since $N$ is a sparse paving matroid, by Theorem \ref{(n-1) partitions} $\mathcal{N}_i$'s are not hyperplanes.  This together with the fact that $r_N(\mathcal{N}_i)\geq n-1$ implies that each $\mathcal{N}_i$ is a basis.
We now claim that there exists $i\in[n]$ such that $a_i+b_{n+1}\notin E(M)$. If not, we have 
\begin{align*}
\mathcal{M} + b_{n+1} \subset E(M).
\end{align*}
Therefore, $\sum_{i=1}^n (a_i + b_{n+1}) \preceq \sum_{i=1}^{n+1} a_i$, as $E(M)$ and $E(N)$ are both positive. Since $|\mathcal{M} + b_{n+1}| = n < n+1 = |E(M)|$, $\mathcal{M} + b_{n+1}$ is a proper subset of $E(M)$. Consequently, $\sum_{i=1}^n (a_i + b_{n+1}) \neq \sum_{i=1}^{n+1} a_i$.

This implies $nb_{n+1} \prec a_{n+1}$, entailing $ny \prec x$. This is a contradiction.

Assume that $i$ is obtained as in the claim. Then $\mathcal{M}$ is matched to $\mathcal{N}_i$ in the matroid sense through $a_j\rightarrow b_j$ for $j\neq i$ and $a_i\rightarrow b_{n+1}$, as needed.
\end{proof}

By utilizing a rectification principle (Theorem \ref{rectification}), which guarantees the existence of a suitable total order for sufficiently small subsets of an abelian group \( G \), Theorem \ref{paving} can be extended as stated in Theorem \ref{paving general}, replacing torsion-free abelian groups with general abelian groups. However, this generalization requires the additional condition:  
\begin{align}\label{total small}
|E(M) \cup E(N) \cup (E(M) + E(N)) \cup \widehat{nE(N)} \cup \{0\}| < \lceil \log_2(p(G)) \rceil,
\end{align}
where $\widehat{nE(N)}=\{ny|\hspace{0.2cm} y\in E(N)\}.$

\begin{theorem}\label{paving general}
Let $G$ be an abelian group. Let $M$ and $N$ be two matroids over $G$ both of the same rank $n$. Assume further that all the following conditions hold:
\begin{enumerate}
\item
$E(M)$ and $E(N)$ are both positive,
\item
$N$ is a sparse paving matroid,
\item
$|E(M)|=|E(N)|=n+1$,
\item
$x\preceq ny$, where $x=\max(E(M))$, $y=\min(E(N))$ and $ny=y+\underbrace{\cdots}_{n-\text{times}}+y$,
\item $n<\max\{2,\frac{-5+\sqrt{5+4\lceil{\log_2 (p(G))\rceil}}}{2}\}.$
\end{enumerate}
Then $M$ is matched to $N$.
\end{theorem}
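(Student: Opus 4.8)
The plan is to deduce this from Theorem \ref{paving} by replacing the ambient compatible order with one produced by the rectification principle. When $n=1$ the statement is trivial, exactly as at the start of the proof of Theorem \ref{paving}, so assume $n\ge 2$; then the clause $n<2$ in condition (5) is not the operative one, so in fact $n<\frac{-5+\sqrt{5+4\lceil\log_2 p(G)\rceil}}{2}$. If $G$ is torsion-free, Theorem \ref{linearly ordered groups} supplies a compatible total order on all of $G$, and with respect to it conditions (1)--(4) are precisely the hypotheses of Theorem \ref{paving}, so there is nothing further to prove. Hence the substance is the case in which $G$ has a non-trivial torsion subgroup, where $G$ itself cannot be totally ordered.

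In that case, put
\[
A \;=\; E(M)\cup E(N)\cup\bigl(E(M)+E(N)\bigr)\cup\widehat{nE(N)}\cup\{0\}.
\]
The first routine step is the cardinality bound $|A|\le 2(n+1)+(n+1)^2+(n+1)+1=n^2+5n+5$, using $|E(M)|=|E(N)|=n+1$ and $|\widehat{nE(N)}|\le n+1$. The second is to observe that condition (5) is exactly the inequality $n^2+5n+5<\lceil\log_2 p(G)\rceil$: for $n\ge 2$ the relation $n<\frac{-5+\sqrt{5+4\lceil\log_2 p(G)\rceil}}{2}$ is equivalent to $2n+5<\sqrt{5+4\lceil\log_2 p(G)\rceil}$ and hence, on squaring, to $4n^2+20n+20<4\lceil\log_2 p(G)\rceil$. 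Combining the two gives $|A|<\lceil\log_2 p(G)\rceil$, so $A$ meets the hypothesis of Theorem \ref{rectification}, which yields a total order $\preceq$ on $A$ compatible with $G$; conditions (1) and (4) are to be read with respect to this $\preceq$. The same estimate forces $\lceil\log_2 p(G)\rceil>n^2+5n+5\ge 19$, hence $p(G)>n+1$, which we will use.

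With $\preceq$ in hand, the plan is to rerun the proof of Theorem \ref{paving} line by line, now interpreting every order comparison inside $A$. Since $|E(M)|=|E(N)|=n+1<p(G)$ and $0\notin E(N)$, Theorem \ref{matchable sets, p(G)} gives a group-matching $f\colon E(M)\to E(N)$, and the conditions $a_i+f(a_i)\notin E(M)$ only concern elements $a_i+f(a_i)\in E(M)+E(N)\subseteq A$ and membership in $E(M)\subseteq A$. Fixing an arbitrary basis $\M=\{a_1,\dots,a_n\}$ of $M$, writing $E(M)=\{a_1,\dots,a_{n+1}\}$, $b_i=f(a_i)$ and $b_{n+1}=f(a_{n+1})$, the identification of each $\N_i=(\{b_1,\dots,b_n\}\setminus\{b_i\})\cup\{b_{n+1}\}$ as a basis of $N$ is the purely matroid-theoretic argument via Theorem \ref{(n-1) partitions} and uses no order. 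The only place the order enters is the claim that $a_i+b_{n+1}\notin E(M)$ for some $i\in[n]$: if not, the $n$ distinct elements $a_1+b_{n+1},\dots,a_n+b_{n+1}$ of $E(M)$ equal $E(M)\setminus\{a_k\}$ for a unique $k$, which (using $p(G)>n$ to exclude $nb_{n+1}=0$, so $0\prec nb_{n+1}$) satisfies $k\le n$; the group identity $\sum_{i=1}^n(a_i+b_{n+1})=\sum_{i\ne k}a_i$ then gives $nb_{n+1}+a_k=a_{n+1}$, and since $a_k\succ 0$ while $0,a_k,nb_{n+1},nb_{n+1}+a_k=a_{n+1}$ all lie in $A$, compatibility of $\preceq$ yields $nb_{n+1}\prec a_{n+1}$; combined with $ny\preceq nb_{n+1}$ and $a_{n+1}\preceq x$ this gives $ny\prec x$, contradicting condition (4). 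Hence some $a_i+b_{n+1}\notin E(M)$, so $\M$ is matched to $\N_i$ via $a_j\mapsto b_j$ ($j\ne i$), $a_i\mapsto b_{n+1}$; as $\M$ was an arbitrary basis of $M$, $M$ is matched to $N$. The point I expect to be the main obstacle is justifying that the order from Theorem \ref{rectification} is strong enough for precisely the additive manipulations transplanted from the proof of Theorem \ref{paving} --- in particular the monotonicity $y\preceq b_{n+1}\Rightarrow ny\preceq nb_{n+1}$ and the passage, from $\M+b_{n+1}\subseteq E(M)$, to the scalar inequality $nb_{n+1}\prec a_{n+1}$. This is exactly why the set $A$ of \eqref{total small} is built to contain not only $E(M)$ and $E(N)$ but also the one-step sumset $E(M)+E(N)$, the dilate $\widehat{nE(N)}$, and $0$: once these sit inside $A$, each comparison needed reduces, after cancellation in $G$, to a comparison of two members of $A$, where $\preceq$ is coherent with the group operation by construction; pinning down this coherence and confirming that the quadratic rearrangement of condition (5) matches the bound $|A|\le n^2+5n+5$ are the two places demanding care, the rest being a transcription of the earlier proof.
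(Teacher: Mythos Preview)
Your proposal is correct and follows the same approach as the paper: show that condition~(5) rearranges to $n^2+5n+5<\lceil\log_2 p(G)\rceil$, use this to bound $|A|$ for the set $A$ in~\eqref{total small}, invoke Theorem~\ref{rectification} to obtain a compatible order on $A$, and then rerun the proof of Theorem~\ref{paving}. In fact your write-up is more careful than the paper's one-line sketch: by recasting the contradiction step as the exact identity $nb_{n+1}+a_k=a_{n+1}$ (rather than the inequality of large sums $\sum_{i\le n}(a_i+b_{n+1})\prec\sum_{i\le n+1}a_i$ used verbatim in Theorem~\ref{paving}), you keep every element being compared inside $A$, which is exactly what the rectified order needs.
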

\begin{proof}
The proof closely follows the argument for Theorem \ref{paving}, with the key difference being that condition (5) establishes equation (\ref{total small}). This allows us to apply Theorem \ref{rectification}, ensuring the existence of a total order compatible with the group structure for sufficiently small subsets of \( G \).
\end{proof}

\begin{remark}
  It is worth mentioning that condition (5) in Theorem \ref{paving general} implies equation \ref{total small}, which provides the needed total order for matroids with small enough ground sets. In other words, we have:
   \begin{align*}
        \ |E(M) \cup E(N) \cup (E(M) + E(N)) \cup \widehat{nE(N)} \cup\{0\} |\leq\\
        |E(M)|+|E(N)|+|E(M)| |E(N)|+ |\widehat{nE(N)}| +|\{0\}| \leq\\
        (n+1)+(n+1)+(n+1)^2+(n+1)+1=\\
        n^2+5n+5<\lceil \log_2 (p(G)) \rceil.
   \end{align*}

\end{remark}
\begin{remark}
    It is worth noting that matchability for sparse paving matroids is examined in \cite{Zerbib 0} as Theorem 2.11, albeit in a slightly different context where condition (4) in Theorem \ref{paving general} is replaced by the condition $\max E(M) \preceq n \max E(N)$. Importantly, neither condition implies the other. Moreover, the proof methods for these theorems are distinct from one another.

\end{remark}
\begin{remark}
Note that the matroid \( N \) in Theorems \ref{paving} and \ref{paving general} is a sparse paving matroid of rank \( n \) on a ground set of size \( n+1 \). Up to isomorphism, there are only two such matroids: \( U_{n,n+1} \) and \( U_{n-1,n} \oplus U_{1,1} \). In other words, a non-uniform sparse paving matroid of rank \( n \) on a ground set of size \( n+1 \) has exactly one circuit-hyperplane, and it becomes uniform after a single relaxation in the circuit-hyperplane sense. With this in mind, it might be tempting to split the arguments of Theorems \ref{paving} and~\ref{paving general} into the two cases \( U_{n,n+1} \) and \( U_{n-1,n} \oplus U_{1,1} \). While the case of \( U_{n,n+1} \) follows immediately from Proposition 2.9 in \cite{Zerbib 0}, the argument for the case \( U_{n-1,n} \oplus U_{1,1} \) remains more intricate.

\end{remark}
\begin{remark}
    In general, within matching theory—even in the group setting, where the structure is relatively simpler—the existence of a total order compatible with the group operation is essential. For instance, in \cite{Alon}, which addresses the existence of a specific family of matchings called acyclic matchings in groups, the proof relies heavily on the presence of a well-behaved total order on $\mathbb{Z}^n$. Unfortunately, such a feature is available only in torsion-free abelian groups \cite{Levi}. Since our goal is to establish results in a more general setting, we instead employ the rectification principle Theorem \ref{rectification}, which ensures the existence of a suitably nice total order for sufficiently small subsets of the group. This is formalized through (\ref{total small}), which we use to guarantee such an order.
\end{remark}
\section{Panhandle matroids and Schubert matroids over Abelian Groups}

\textbf{Panhandle matroids:} Let $n \leq s < m$ be nonnegative integers. The \textit{panhandle matroid} $\mathcal{P}_{n,s,m}$ is a rank $n$ matroid on the ground set $[m]$ with the basis system
$$
\mathcal{B} = \mathcal{B}_{n,s,m} = \left\{ B \in \binom{[m]}{n} \ \Big\vert \ |B \cap [s]| \geq n - 1 \right\}.
$$
Panhandle matroids were first introduced in \cite{Hanley}. It is observed that $\mathcal{B}$ is a special case of lattice path matroids. Notable special cases include the minimal matroids described in \cite{Ferroni 0} and uniform matroids.

We now extend the definition of panhandle matroids to abelian groups. In this context, it is crucial to have a total order that is compatible with the group structure. Such a total order can be obtained in torsion-free groups by invoking Theorem \ref{linearly ordered groups}. Alternatively, for sufficiently small subsets of a general abelian group, Theorem \ref{rectification} can be applied. Therefore, we may assume that either $G$ is torsion-free or $m < \lceil \log_2(p(G)) \rceil$ when $G$ has a nonzero torsion element.

Let $\preceq$ be a total order on $(G, +)$ that is compatible with its structure. Let $m$ be a positive integer. For any nonzero $a \in G$, we define
\[
[m]_{(a,G)} = \{a, 2a, \dots, ma\},
\]
where $ma = a + \underbrace{\cdots}_{m \text{ times}} + a$.

Let $n \leq s < m$ be nonnegative integers. The \textit{extended panhandle matroid} $\mathcal{P}_{n,s,m}(a,G)$ is the rank $n$ matroid on the ground set $[m]_{(a,G)}$ with basis system
\[
\mathcal{B}_G = \mathcal{B}_{n,s,m}(a,G) = \left\{ B \in \binom{[m]_{(a,G)}}{n} \ \Big\vert \ |B \cap [s]_{(a,G)}| \geq n - 1 \right\}.
\]

We abbreviate $[m]_{(a,G)}$, $\mathcal{B}_{(n,s,m)}(a,G)$, and $\mathcal{P}_{n,s,m}(a,G)$ to $[m]_a$, $\mathcal{B}_{(n,s,m)}(a)$, and $\mathcal{P}_{n,s,m}(a)$ when $G$ is clear from the context.\\
\begin{example}\label{Panhandle example}
  We compute \(\mathcal{P}_{3,4,5}\left((2,-1,0), \mathbb{Z}^3\right)\). Note that we consider the lexicographic order on the additive group $\mathbb{Z}^3$ here.
 The ground set is:
\[
[5]_{\left((2, -1, 0), \mathbb{Z}^3\right)} = \left\{(2,-1,0), \, (4,-2,0), \, (6,-3,0), \, (8,-4,0), \, (10,-5,0)\right\}.
\]

This is a matroid of rank 3 and the basis system is:
\[
\mathcal{B}_{3,4,5}\left((2,-1,0), \mathbb{Z}^3\right) = \left\{ B \in \binom{[5]_{\left((2, -1, 0), \mathbb{Z}^3\right)}}{3} \;\middle|\; \left| B \cap [4]_{\left((2, -1, 0), \mathbb{Z}^3\right)} \right| \geq 2 \right\}
\]

\[
= \left\{
\begin{aligned}
&\left\{(2,-1,0), \, (4,-2,0), \, (6,-3,0)\right\}, \\
&\left\{(2,-1,0), \, (6,-3,0), \, (8,-4,0)\right\}, \\
&\left\{(2,-1,0), \, (4,-2,0), \, (8,-4,0)\right\}, \\
&\left\{(4,-2,0), \, (6,-3,0), \, (8,-4,0)\right\}, \\
&\left\{(2,-1,0), \, (4,-2,0), \, (10,-5,0)\right\}, \\
&\left\{(2,-1,0), \, (6,-3,0), \, (10,-5,0)\right\}, \\
&\left\{(2,-1,0), \, (8,-4,0), \, (10,-5,0)\right\}, \\
&\left\{(4,-2,0), \, (6,-3,0), \, (10,-5,0)\right\}, \\
&\left\{(4,-2,0), \, (8,-4,0), \, (10,-5,0)\right\}, \\
&\left\{(6,-3,0), \, (8,-4,0), \, (10,-5,0)\right\}
\end{aligned}
\right\}
.\]
 
\end{example}

\textbf{Schubert matroids:} Let $m$ and $n$ be two positive integers with $n \leq m$. Let $S$ be an $n$-subset of $[m]$. The \textit{Schubert matroid} $SM_m(S)$ is the matroid with ground set $[m]$ and bases
\[
\{T \subset [m] : T \leq S\},
\]
where $T \leq S$ means that $|T| = |S|$ and the $i$-th smallest element of $T$ does not exceed that of $S$ for $1 \leq i \leq n$. Schubert matroids were introduced in \cite{Crapo} to prove that there are at least $2^m$ nonisomorphic matroids on $m$ elements. 

In what follows, we define Schubert matroids over abelian groups. Similar to the panhandle matroid case, we assume that $G$ (or small enough subsets of $G$) is equipped with a total ordering $\preceq$ that is compatible with its structure. In particular, for a group containing nonzero torsion elements, we assume that $m$ and $n$ are two positive integers with $n \leq m < \lceil \log_2 (p(G)) \rceil$. Let $a \in G^+$ and $S$ be an $n$-subset of $[m]_a$.

The \textit{extended Schubert matroid:} $SM_m(a,G,S)$ is the rank $n$ matroid whose ground set is $[m]_a$ and whose basis system is
\[
\{T \subset [m]_a : T \preceq S\},
\]
where $T \preceq S$ means that $|T| = |S|$ and the $i$-th smallest element of $T$ does not exceed that of $S$ for $1 \leq i \leq n$. Note that $SM_m(1,\mathbb{Z},S) = SM_m(S)$. The negative counterpart of Schubert matroids may be defined in a similar way if $a \in G^-$. We shorten $SM_m(a,G,S)$ to $SM_m(a,S)$ when $G$ is clear from the context.

\begin{example}\label{Schubert example}
   We compute $ SM_{5}\left((2, -1, 0), \mathbb{Z}^3, S\right) $, where 
   $$ 
   S = \left\{(2, -1, 0), (4, -2, 0), (10, -5, 0)\right\}.
   $$

   The ground set is given by:
   \[
   [5]_{\left((2, -1, 0), \mathbb{Z}^3\right)} = \left\{(2, -1, 0), \, (4, -2, 0), \, (6, -3, 0), \, (8, -4, 0), \, (10, -5, 0)\right\}
   \]

   The basis system is defined as:
   \[
   \mathcal{B} = \left\{ T \subseteq [5]_{(2, -1, 0)} \mid T \preceq \left\{(2, -1, 0), (4, -2, 0), (10, -5, 0)\right\} \right\}
   \]

   \[
   = \left\{
   \begin{aligned}
   &\left\{(2, -1, 0), \, (4, -2, 0), \, (6, -3, 0)\right\}, \\
   &\left\{(2, -1, 0), \, (4, -2, 0), \, (8, -4, 0)\right\}, \\
   &\left\{(2, -1, 0), \, (4, -2, 0), \, (10, -5, 0)\right\}
   \end{aligned}
   \right\}.
   \]
   Therefore, $ SM_{5}\left((2, -1, 0), \mathbb{Z}^3, S\right) $ has exactly three bases.
\end{example}

The following observation characterizes uniform Schubert matroids:
\begin{observation}\label{uniform Schubert}
The extended Schubert matroid $SM_m(a,S)$ is isomorphic to the uniform matroid $U_{n,m}$ if and only if $S = \{(m-n+1)a, \dots, ma\}$.
\end{observation}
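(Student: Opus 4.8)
The plan is to reduce the isomorphism statement to an equality of basis systems and then verify the resulting Gale-order condition by an elementary count of order statistics in a chain. Since $a \in G^+$ and $\preceq$ is compatible with the group structure, we have $a \prec 2a \prec \cdots \prec ma$, so $[m]_a$ is a totally ordered set of size $m$, order-isomorphic to $[m]$; throughout I would write $S = \{s_1 \prec \cdots \prec s_n\}$ and likewise for other $n$-subsets.

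First I would record the following general fact: if a matroid $M$ on an $m$-element ground set with rank $n$ satisfies $M \cong U_{n,m}$, then \emph{every} $n$-subset of $E(M)$ is a basis. Indeed, a matroid isomorphism is a ground-set bijection carrying bases to bases, so it preserves the number of bases; since $U_{n,m}$ has the maximum possible number $\binom{m}{n}$ of bases among rank-$n$ matroids on $m$ elements, $M$ must have $\binom{m}{n}$ bases as well, forcing all its $n$-subsets to be bases (equivalently $M = U_{n,m}$). Applying this with $M = SM_m(a,S)$, the observation becomes the purely combinatorial claim: every $n$-subset $T$ of $[m]_a$ satisfies $T \preceq S$ if and only if $S = \{(m-n+1)a,\dots,ma\}$.

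For the ``if'' direction I would use the order-statistic bound that, for any $n$-subset $T = \{t_1 \prec \cdots \prec t_n\}$ of the $m$-element chain $[m]_a$, one has $t_i \preceq (m-n+i)a$: the elements $t_i \prec t_{i+1} \prec \cdots \prec t_n$ are $n-i+1$ distinct members of $[m]_a$ all at least $t_i$, and the chain has only $m$ elements. If $S = \{(m-n+1)a, \dots, ma\}$, i.e.\ $s_i = (m-n+i)a$, this bound gives $t_i \preceq s_i$ for all $i$, so $T \preceq S$ and $SM_m(a,S) = U_{n,m}$. For the ``only if'' direction I would test the condition on the $\preceq$-largest $n$-subset $T_{\max} = \{(m-n+1)a, \dots, ma\}$: since $T_{\max}$ must be a basis, $(m-n+i)a \preceq s_i$ for all $i$, while the order-statistic bound applied to $T = S$ gives $s_i \preceq (m-n+i)a$; combining yields $s_i = (m-n+i)a$, hence $S = \{(m-n+1)a, \dots, ma\}$.

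I do not expect a serious obstacle here. The only two points that require care are the reduction from ``$\cong U_{n,m}$'' to ``every $n$-subset is a basis'' (where one invokes that $U_{n,m}$ maximizes the number of bases) and the elementary inequality that the $i$-th smallest element of an $n$-subset of an $m$-chain is at most its $(m-n+i)$-th element; everything else is bookkeeping with the total order $\preceq$ on $[m]_a$.
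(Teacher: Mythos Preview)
Your proof is correct and is essentially a fully detailed unpacking of what the paper dispatches in one line as ``immediate from the definition of an extended Schubert matroid.'' The paper gives no further argument, so your Gale-order verification together with the reduction from $\cong U_{n,m}$ to ``every $n$-subset is a basis'' are exactly the details a careful reader would supply.
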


\begin{proof}
This follows immediately from the definition of an extended Schubert matroid.
\end{proof}

The proofs of Theorems \ref{Asy Panhandle} and \ref{Asymmetric Schubert} rely on the existence of a total order on 
\begin{align}\label{total order existence}
    [m]_a \cup ([m]_a+[m]_a)\cup\{0\} = [2m]_a\cup\{0\},
\end{align}
that is compatible with the group structure of $G$. For the existence of such orders, see Section \ref{total order}. In particular, Theorem \ref{linearly ordered groups} and Theorem \ref{rectification} imply that such an order exists whenever $G$ is torsion-free, or when (\ref{total order existence}) is as small as:
\begin{align}\label{total order equ}
    m < \frac{\lceil \log_2 (p(G)) \rceil}{2}.
\end{align}

In our results, for simplicity, we say {\it{$m$ is sufficiently small}} if (\ref{total order equ}) holds. Note that in an abelian torsion-free group $G$, since $\lceil \log_2 (p(G)) \rceil = \infty$, every $m \in \mathbb{N}$ satisfies the inequality in (\ref{total order equ}). Consequently, every subset of $G$ can be regarded as sufficiently small.\\
We are now ready to state and prove our results on matchability between panhandle and Schubert matroids in asymmetric settings, as presented in Theorems~\ref{Asy Panhandle} and~\ref{Asymmetric Schubert}.
\begin{theorem} \label{Asy Panhandle}
Let $G$ be an abelian group and $n \leq s, s' < m$ be nonnegative integers. Let $m$ be sufficiently small. Let $a \in G$ be nonzero and $S \subset [m]_a$ be an $n$-subset. Let $M = \mathcal{P}_{n,s,m}(a)$ or $M = SM_m(a,S)$. Then $M$ is matched to $\mathcal{P}_{n,s',m}(a)$ if and only if $s' = m-1$.
\end{theorem}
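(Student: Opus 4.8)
The statement is an ``if and only if'', so there are two directions, and one of them (the ``only if'') should be handled first because it clarifies what makes $s'=m-1$ special. For the \emph{necessity} of $s'=m-1$: suppose $M$ is matched to $\mathcal{P}_{n,s',m}(a)$. The idea is to exhibit a basis $\mathcal{M}$ of $M$ whose matching partners are forced to be large elements of $[m]_a$, and then observe that $\mathcal{P}_{n,s',m}(a)$ with $s'<m-1$ does not contain enough ``large'' bases. Concretely, take the basis $\mathcal{M}$ of $M$ consisting of the $n$ largest elements of $[m]_a$, i.e. $\{(m-n+1)a,\dots,ma\}$ (this is a basis both for a panhandle matroid, since it meets $[s]_a$ in at least $n-1$ elements once $s\ge n-1$, and for $SM_m(a,S)$ only if $S$ is the top $n$-set; so one may instead need the basis $\{(s-n+2)a,\dots,sa,ma\}$ of $\mathcal{P}_{n,s,m}(a)$, which always exists, or the analogous top-feasible basis of $SM_m(a,S)$). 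For any basis $\mathcal{N}=\{b_1,\dots,b_n\}$ of $\mathcal{P}_{n,s',m}(a)$ matched to it via $a_i\mapsto b_i$, the condition $a_i+b_i\notin[m]_a$ forces each $b_i$ to be large relative to $a_i$ — in particular when $a_i$ is itself one of the top elements, $b_i$ must be large. A counting/pigeonhole argument using a compatible total order on $[2m]_a\cup\{0\}$ (available since $m$ is sufficiently small, by Theorems \ref{linearly ordered groups} and \ref{rectification}) then shows that the $b_i$'s are forced to lie in a set of size $<n$ unless $s'=m-1$, giving a contradiction. The precise threshold drops out of comparing $\sum a_i + \sum b_i$ against what elements of $[m]_a$ are available, exactly as in the sum estimate used in the proof of Theorem \ref{paving}.

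For the \emph{sufficiency}: assume $s'=m-1$, so $\mathcal{P}_{n,m-1,m}(a)$ has basis system $\{B\in\binom{[m]_a}{n} : |B\cap[m-1]_a|\ge n-1\}$, i.e. \emph{every} $n$-subset of $[m]_a$ is a basis except possibly those missing too much of $[m-1]_a$; since $[m]_a\setminus[m-1]_a=\{ma\}$ is a single element, in fact every $n$-subset of $[m]_a$ is a basis of $\mathcal{P}_{n,m-1,m}(a)$ — that is, $\mathcal{P}_{n,m-1,m}(a)\cong U_{n,m}$. So the task reduces to: given any basis $\mathcal{M}=\{a_1,\dots,a_n\}$ of $M$ (where $M$ is a panhandle or Schubert matroid on $[m]_a$), find an $n$-subset $\mathcal{N}=\{b_1,\dots,b_n\}$ of $[m]_a$ with $a_i+b_i\notin[m]_a$ for all $i$. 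Here I would invoke the group-theoretic matching machinery: take $A=\mathcal{M}$ and a suitable $B\subset[m]_a$ of size $n$ with $0\notin B$, and apply Theorem \ref{matchable sets, p(G)} (valid since $m$, hence $n$, is sufficiently small relative to $p(G)$, and in the torsion-free case trivially) to get a matching $f:A\to B$ in the group sense; then $a_i+f(a_i)\notin A$, but we need $a_i+f(a_i)\notin[m]_a$, which is stronger. To upgrade this, choose $B$ to consist of the \emph{largest} $n$ elements of $[m]_a$ that are relevant, or use the total order: since $a_i\preceq ma$ and $b_i$ can be taken large, $a_i+b_i\succ ma$, hence $a_i+b_i\notin[m]_a$. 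The cleanest route is probably to mimic the proof of Theorem \ref{paving}: first produce a group-sense matching into a candidate set, then repair it using the fact that $U_{n,m}$ has many bases (any $n$-subset), replacing offending partners by larger elements of $[m]_a$ whose sum with $a_i$ overshoots $ma$.

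The \textbf{main obstacle} I anticipate is the sufficiency direction's ``upgrade'' step: a group-sense matching only guarantees $a_i+f(a_i)\notin\mathcal{M}$, whereas matchability of matroids over $G$ requires $a_i+f(a_i)\notin E(M)=[m]_a$, a much larger set. The leverage we have is (i) $E(M)$ is an arithmetic-progression-like positive set $\{a,2a,\dots,ma\}$, so $a_i+b_i\notin[m]_a$ is equivalent to $a_i+b_i\succ ma$ whenever both are positive and $a_i+b_i$ exceeds the top; and (ii) the target $U_{n,m}$ is maximally flexible — every $n$-subset is a basis — so we are free to pick $\mathcal{N}$ to be the top $n$ elements $\{(m-n+1)a,\dots,ma\}$, and then need a bijection $a_i\mapsto b_i$ with $a_i+b_i\succ ma$, equivalently (writing $a_i=c_ia$, $b_i=d_ia$ with $c_i,d_i\in[m]$) $c_i+d_i>m$. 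Sorting $\mathcal{M}$ increasingly as $c_1<\dots<c_n$ and matching $c_i$ with $d_i=m-n+i$ gives $c_i+d_i$; this is $>m$ for all $i$ iff $c_i>n-i$ for all $i$, i.e. iff the $i$-th smallest element of $\mathcal{M}$ is $\succ(n-i)a$ — which is automatic for the \emph{top-heavy} bases and must be checked/arranged in general. Thus the real content is verifying that every basis of a panhandle matroid $\mathcal{P}_{n,s,m}(a)$ (resp. Schubert matroid $SM_m(a,S)$) admits \emph{some} enumeration against \emph{some} $n$-subset of $[m]_a$ making all pairwise sums overshoot $ma$; a Hall-type / greedy argument on the sorted coordinates, combined with the sum estimate from Theorem \ref{paving}'s proof, should close this, and the failure of exactly this condition when $s'<m-1$ is what drives the necessity direction.
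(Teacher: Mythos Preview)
Your necessity argument picks the wrong test basis. Taking $\mathcal{M}=\{(m-n+1)a,\dots,ma\}$ (or your variant $\{(s-n+2)a,\dots,sa,ma\}$) gives \emph{slack} constraints: when $a_i$ is large, the condition $a_i+b_i\succ ma$ permits $b_i$ to be small, so the partner $\mathcal{N}$ is not forced into any particular set. Concretely, whenever $\{(m-n+1)a,\dots,ma\}$ happens to be a basis of $M$, it is matched in the matroid sense to $\{a,2a,\dots,na\}$ via $(m-n+i)a\mapsto(n+1-i)a$ (each sum equals $(m+1)a\notin[m]_a$), and $\{a,\dots,na\}$ is a basis of $\mathcal{P}_{n,s',m}(a)$ for every $s'\ge n$; so no contradiction arises and your pigeonhole/sum estimate cannot close the argument. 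The paper instead tests with the \emph{bottom} basis $\mathcal{M}=\{a,2a,\dots,na\}$, which is always a basis of both $\mathcal{P}_{n,s,m}(a)$ and $SM_m(a,S)$. Now the constraints are tight: $a+b\notin[m]_a$ forces $b=ma$; then $2a+b'\notin[m]_a$ with $b'\ne ma$ forces $b'=(m-1)a$; and inductively the only possible partner is $\mathcal{N}=\{(m-n+1)a,\dots,ma\}$. This set meets $[s']_a$ in at least $n-1$ elements only when $s'=m-1$.

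For sufficiency your observation that $\mathcal{P}_{n,m-1,m}(a)\cong U_{n,m}$ is correct, but the ``upgrade obstacle'' you worry about dissolves if you match on the whole ground set rather than on $\mathcal{M}$ alone. Since $0\notin[m]_a$, Theorem~\ref{symmetric matching} gives a bijection $f:[m]_a\to[m]_a$ with $x+f(x)\notin[m]_a$ for all $x$; restricting $f$ to any basis $\mathcal{M}$ of $M$ yields an $n$-subset $f(\mathcal{M})$ of $[m]_a$, automatically a basis of $U_{n,m}$, matched to $\mathcal{M}$ in the matroid sense. (The paper's proof of Theorem~\ref{Asy Panhandle} in fact writes only the necessity direction---its ``Conversely'' paragraph is the contrapositive of the first paragraph---but this sufficiency argument is exactly the converse half of the proof of Theorem~\ref{Asymmetric Schubert}.) Incidentally, your sorted-greedy idea can also be rescued by reversing the pairing: match the $i$-th smallest element $c_ia$ of $\mathcal{M}$ with $(m+1-i)a$; since distinct $c_i\in[m]$ satisfy $c_i\ge i$, each sum is at least $(m+1)a\succ ma$, so lies outside $[m]_a$.
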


\begin{proof}

First, assume that $M$ is matched to $\mathcal{P}_{n,s',m}(a)$. From the structure of $M$, it follows that the $n$-subset $\mathcal{M} = \{a, 2a, \dots, na\}$ of $[m]_a$ is a basis for $M$. Consequently, it must be matched to a basis $\mathcal{N} = \{b_1, b_2, \dots, b_n\}$ of $\mathcal{P}_{n,s',m}(a)$, where $b_1 \prec b_2 \prec \dots \prec b_n$.

Assume that $a$ is mapped to $b_{i_1}$ for some index $i_1 \in [n]$. By the matchability of bases in the matroid sense, we have $ma \prec a + b_{i_1}$. On the other hand, we can express this as:
\[
a + b_{i_1} \preceq a + ma = (m+1)a,
\]
which implies that $b_{i_1} = ma$, leading to the conclusion that $i_1 = n$. Therefore, we find that $a$ is mapped to $b_n = ma$.

Next, assume that $2a$ is mapped to $b_{i_2}$. Similarly, we can deduce that $2a$ must be mapped to $b_{n-1} = (m-1)a$. Continuing in this manner, we conclude that the basis $\mathcal{N}$ takes the form $\{(m-n+1)a, \dots, ma\}$. Since $\mathcal{N}$ is a basis of $\mathcal{P}_{n,s',m}(a)$, we must have $|\mathcal{N} \cap [s']_{(a)}| \geq n-1$. This implies that $s' = m-1$.

Conversely, assume that $s' \neq m-1$. Thus, we have $s' < m-1$. Consider the basis $\mathcal{M} = \{a, 2a, \dots, na\}$ of $M$. We claim that $\mathcal{M}$ cannot be matched to any basis of $\mathcal{P}_{n,s',m}(a)$. Indeed, if $\mathcal{M}$ is matched to a basis $\mathcal{N} = \{b_1, \dots, b_n\}$ of $\mathcal{P}_{n,s',m}(a)$ with $b_1 \prec b_2 \prec \dots \prec b_n$, then it follows that $a$ must be mapped to $b_n = ma$.

Now, assume that $2a$ is mapped to $b_i$ for some $i \in [n-1]$. Then we have:
\[
ma \prec 2a + b_i \preceq 2a + s'a = (2 + s')a \preceq ma,
\]
which leads to a contradiction. Therefore, we conclude that $\mathcal{M}$ cannot be matched to any basis of $\mathcal{P}_{n,s',m}(a)$.

\end{proof}
\begin{corollary}[Symmetric matchability for panhandle matroids]\label{sym pan}
For sufficiently small $m$, the matroid $\mathcal{P}_{n,s,m}(a)$ is matched to itself.
\end{corollary}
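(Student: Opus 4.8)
The plan is to obtain Corollary \ref{sym pan} as the diagonal instance of Theorem \ref{Asy Panhandle}. By Definition \ref{def, matching matroid}, the assertion that $\mathcal{P}_{n,s,m}(a)$ is matched to itself is precisely the assertion that $\mathcal{P}_{n,s,m}(a)$ is matched to $\mathcal{P}_{n,s',m}(a)$ in the case $s'=s$. Since $m$ is sufficiently small, the compatible total order on $[2m]_a\cup\{0\}$ required by Theorem \ref{Asy Panhandle} is available, and the remaining hypotheses ($a\neq 0$, $n\le s<m$) are inherited directly. So the first step is bookkeeping: take the source and target panhandles equal and feed them into the theorem.

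The second step is to read off the equivalence. Theorem \ref{Asy Panhandle} gives that $\mathcal{P}_{n,s,m}(a)$ is matched to $\mathcal{P}_{n,s',m}(a)$ iff $s'=m-1$; specializing to $s'=s$ shows $\mathcal{P}_{n,s,m}(a)$ is matched to itself iff $s=m-1$. I would then pin down this regime concretely: when $s=m-1$, every $n$-subset $B$ of $[m]_a$ omits at most the single element $ma$ from $[m-1]_a$, so $|B\cap[m-1]_a|\ge n-1$, and hence $\mathcal{P}_{n,m-1,m}(a)=U_{n,m}$ on $[m]_a$. To keep the proof self-contained I would also display the matching explicitly in this case: for a basis $\mathcal{M}=\{p_1a,\dots,p_na\}$ with $1\le p_1<\dots<p_n\le m$, set $\mathcal{N}=\{(m+1-p_1)a,\dots,(m+1-p_n)a\}$ and send $p_ia\mapsto(m+1-p_i)a$. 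Compatibility of the order with $G$ yields $p_ia+(m+1-p_i)a=(m+1)a\succ ma=\max[m]_a$, so each sum lies outside $[m]_a$; and since every $n$-subset is a basis when $s=m-1$, $\mathcal{N}$ is a basis, so $\mathcal{M}$ is matched to $\mathcal{N}$ in the matroid sense.

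The main obstacle is the rigidity of the matching condition for bottom-heavy bases, which is exactly what both proves the corollary in the uniform regime and forces $s=m-1$. For $\mathcal{M}=\{a,2a,\dots,na\}$, the requirement $p_ia+b_i\notin[m]_a$ forces $b_i=(m+1-p_i)a$ one index at a time --- $a$ must go to $ma$, then $2a$ to $(m-1)a$, and so on --- exactly as in the ``only if'' direction of Theorem \ref{Asy Panhandle}. Thus the matched basis is forced to be $\{(m-n+1)a,\dots,ma\}$, whose intersection with $[s]_a$ has size $n-1$ only when $s=m-1$. This is the step that requires care (it is where the total order is genuinely used), and carrying it out is what establishes the corollary; notably, by analogy with Losonczy's symmetric matching (Theorem \ref{symmetric matching}, which needs only $0\notin[m]_a$) one might expect self-matchability for every $s$, but the stronger matroid condition $a_i+b_i\notin E$ (rather than merely $\notin\mathcal{M}$) is what restricts it to the uniform case.
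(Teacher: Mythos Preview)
Your approach is the same as the paper's: the paper's entire proof is the single sentence ``This result follows directly from Theorem \ref{Asy Panhandle}.'' You invoke the same theorem with $s'=s$, which is exactly what the paper intends.

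Where you diverge is that you have read Theorem \ref{Asy Panhandle} more carefully than the corollary statement itself. The theorem gives an \emph{equivalence}: $\mathcal{P}_{n,s,m}(a)$ is matched to $\mathcal{P}_{n,s',m}(a)$ if and only if $s'=m-1$. Specializing $s'=s$ therefore yields self-matchability \emph{if and only if} $s=m-1$, i.e., precisely when $\mathcal{P}_{n,s,m}(a)\cong U_{n,m}$. Your analysis of the forced target basis $\{(m-n+1)a,\dots,ma\}$ for $\mathcal{M}=\{a,\dots,na\}$, and your check that this set meets $[s]_a$ in at least $n-1$ elements only when $s=m-1$, are both correct. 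So you have not proved the corollary as printed; you have proved the sharper (and correct) biconditional, and in doing so shown that the unconditional statement fails for $n\ge 2$ and $s<m-1$. This is consistent with the Schubert analogue immediately following in the paper, which \emph{is} stated as an if-and-only-if, and strongly suggests the printed corollary is missing its ``iff $s=m-1$'' clause.

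Your explicit matching $p_ia\mapsto (m+1-p_i)a$ in the $s=m-1$ case is correct and pleasant, but it is not needed once you are citing Theorem \ref{Asy Panhandle}: the ``if'' direction there already covers it (via Theorem \ref{symmetric matching} applied to $[m]_a$, exactly as in the converse of Theorem \ref{Asymmetric Schubert}).
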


\begin{proof}
This result follows directly from Theorem \ref{Asy Panhandle}.
\end{proof}

\begin{remark}
It is noteworthy that Corollary \ref{sym pan} serves as the panhandle matroid analogue of Theorem 2.1 in \cite{Zerbib 0}, which establishes symmetric matchability for sparse paving matroids.
\end{remark}

\begin{example}
Consider the matroids $\mathcal{P}_{3,4,5}((2,-1,0), \mathbb{Z}^3)$ and $SM_5((2,-1,0), \mathbb{Z}^3, S)$ as discussed in Examples \ref{Panhandle example} and \ref{Schubert example}. By Theorem \ref{Asy Panhandle}:
\begin{itemize}
    \item The matroid $\mathcal{P}_{3,4,5}((2,-1,0), \mathbb{Z}^3)$ is matched to itself.
    \item The matroid $\mathcal{P}_{3,4,5}((2,-1,0), \mathbb{Z}^3)$ is matched to $SM_5((2,-1,0), \mathbb{Z}^3, S)$.
\end{itemize}
\end{example}

\begin{theorem}\label{Asymmetric Schubert}

Let $G$ be an abelian group, and let $n \leq s < m$ be positive integers with $m$ sufficiently small. Let $a \in G$ be a nonzero element. Consider two subsets $S$ and $S'$ of $[m]_a$ such that $|S| = |S'| = n$. Assume that $M = SM_m(a,S)$ or $M = \mathcal{P}_{n,s,m}(a)$. Then, $M$ is matched to $SM_m(a,S')$ if and only if $SM_m(a,S') \cong U_{n,m}$.

\end{theorem}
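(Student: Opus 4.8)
The plan is to mirror the strategy of Theorem \ref{Asy Panhandle}: exploit the fact that $M$ (whether it is $SM_m(a,S)$ or $\mathcal{P}_{n,s,m}(a)$) always contains the "bottom" basis $\mathcal{M} = \{a, 2a, \dots, na\}$, and then track where a matching must send its elements. For the forward direction, suppose $M$ is matched to $SM_m(a,S')$. Then $\mathcal{M}$ must be matched to some basis $\mathcal{N} = \{b_1 \prec b_2 \prec \dots \prec b_n\}$ of $SM_m(a,S')$. Writing the image of $a$ as $b_{i_1}$, the matching condition gives $ma \prec a + b_{i_1}$, while $a + b_{i_1} \preceq a + ma = (m+1)a$ forces $b_{i_1} = ma$; the monotonicity of the $b_j$'s then pins $i_1 = n$. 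Inductively, having forced $a, 2a, \dots, (k-1)a$ to map onto $ma, (m-1)a, \dots, (m-k+2)a$, the element $ka$ must map to one of the remaining $b_j$, the largest available being $b_{n-k+1}$; the inequality $ma \prec ka + b_{i_k}$ combined with $ka + b_{i_k} \preceq ka + (m-k+1)a$ (using that $b_{i_k}$ is among the $n-k+1$ smallest elements of $\mathcal{N}$, hence at most $(m-k+1)a$ once we know $\mathcal{N}$'s top $k-1$ elements) forces $b_{i_k} = (m-k+1)a$. Hence $\mathcal{N} = \{(m-n+1)a, \dots, ma\}$, which by Observation \ref{uniform Schubert} is a basis of $SM_m(a,S')$ only if $S' = \{(m-n+1)a, \dots, ma\}$, i.e. $SM_m(a,S') \cong U_{n,m}$.

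For the converse, assume $SM_m(a,S') \cong U_{n,m}$, so every $n$-subset of $[m]_a$ is a basis of $SM_m(a,S')$. Take an arbitrary basis $\mathcal{M} = \{a_1 \prec \dots \prec a_n\}$ of $M$; I must produce a basis $\mathcal{N}$ of $SM_m(a,S')$ to which $\mathcal{M}$ is matched. The natural candidate is the "descending" assignment $a_j \mapsto (m-j+1)a$ when this is well-defined, or more robustly, to invoke Theorem \ref{matchable sets, p(G)} (legitimate since $m$ is sufficiently small, so $n < p(G)$ after rectification, and $0 \notin [m]_a$): there is a group-sense matching $f : \mathcal{M} \to \mathcal{N}$ for a suitable target, and since $U_{n,m}$ has all $n$-subsets as bases, the image $\mathcal{N} = f(\mathcal{M})$ is automatically a basis. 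I would need to argue more carefully that $f$ can be chosen with image exactly an $n$-subset of $[m]_a$; since $SM_m(a,S') \cong U_{n,m}$ means any $n$-subset works, I can in fact apply Theorem \ref{matchable sets, p(G)} with $A = \mathcal{M}$ and $B$ any $n$-subset of $[m]_a \setminus \{0\} = [m]_a$ and the resulting group matching is immediately a matroid matching. This shows $M$ is matched to $SM_m(a,S')$.

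The main obstacle is the inductive step in the forward direction: making precise why, at stage $k$, the element $b_{i_k}$ that $ka$ maps to is bounded above by $(m-k+1)a$. This requires knowing that the $k-1$ elements already used as images of $a, \dots, (k-1)a$ are exactly the top $k-1$ elements $ma, (m-1)a, \dots, (m-k+2)a$ of $\mathcal{N}$ — which is what the induction hypothesis supplies — so that $ka$ is matched to an element among the smallest $n-k+1$ elements of $\mathcal{N}$, and the largest of those is $b_{n-k+1} \preceq (m-k+1)a$ because $\mathcal{N}$ is an $n$-subset of $[m]_a$. Getting the bookkeeping on indices and on the "$i$-th smallest element" condition exactly right is the delicate part; everything else is a direct transcription of the argument in Theorem \ref{Asy Panhandle}. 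One should also double-check the edge behavior: the case $M = \mathcal{P}_{n,s,m}(a)$ needs only that $\{a,\dots,na\}$ is a panhandle basis (clear, since $|\{a,\dots,na\} \cap [s]_a| = n \geq n-1$), and the case $n = m$ or $s = m-1$ causes no trouble.
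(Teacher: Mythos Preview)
Your forward direction is correct and essentially the paper's argument, with one cosmetic difference: the paper writes $S'=\{b_1\prec\dots\prec b_n\}$ and the basis as $\mathcal{N}=\{a_1,\dots,a_n\}$, then uses the Schubert inequality $a_{i_1}\preceq b_{i_1}\preceq (m-n+i_1)a$ to pin down $S'$ directly at each stage. You instead determine $\mathcal{N}=\{(m-n+1)a,\dots,ma\}$ first and then deduce $S'$ from $\mathcal{N}\preceq S'$; both routes work.

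There is, however, a genuine gap in your converse. A matching in the group sense from $A=\mathcal{M}$ to some $n$-subset $B$, as supplied by Theorem~\ref{matchable sets, p(G)}, only guarantees $a_i+f(a_i)\notin A=\mathcal{M}$. The matroid matching condition in Definition~\ref{def, matching matroid} demands $a_i+f(a_i)\notin E(M)=[m]_a$, which is strictly stronger. So the sentence ``the resulting group matching is immediately a matroid matching'' is false as written.

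The paper closes this gap by applying Theorem~\ref{symmetric matching} to the \emph{entire} ground set $[m]_a$: since $0\notin[m]_a$, there is a bijection $f:[m]_a\to[m]_a$ with $x+f(x)\notin[m]_a$ for every $x$; restricting $f$ to any basis $\mathcal{M}$ then gives an image $\mathcal{N}=f(\mathcal{M})$ that is an $n$-subset of $[m]_a$ (hence a basis of $U_{n,m}$) and satisfies $a_i+f(a_i)\notin E(M)$ automatically. Alternatively, your discarded ``descending'' assignment $a_j\mapsto (m-j+1)a$ already works: since $a_j$ is the $j$-th smallest element of an $n$-subset of $[m]_a$ we have $ja\preceq a_j$, whence $a_j+(m-j+1)a\succeq (m+1)a\succ ma$, so the sum lies outside $[m]_a$. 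Either fix repairs the converse.
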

\begin{proof}

First, assume that $M$ is matched to $SM_{m}(a, S')$. Without loss of generality, we can assume that $a$ is positive. Let $S' = \{b_1, \dots, b_n\}$, where $b_1 \prec \dots \prec b_n$. Consider the $n$-subset $\mathcal{M} = \{a, 2a, \dots, na\}$ of $[m]_a$. By the definition of $\mathcal{P}_{n, s, m}(a)$ (or $SM_{m}(a, S)$), $\mathcal{M}$ is a basis for $M$. Therefore, it must be matched to a basis $\mathcal{N} = \{a_1, \dots, a_n\}$ of $SM_m(a, S')$. 

Assume that $a$ is matched to $a_{i_1}$ for some index $i_1 \in [n]$. Then we have:
\begin{align}\label{Schubert 1}
    ma \prec a + a_{i_1} \preceq a + b_{i_1} \preceq a + (m - n + i_1)a.
\end{align}
This implies that $m < 1 + m - n + i_1$. Since $i_1 \in [n]$, it follows that $i_1 = n$. Rewriting (\ref{Schubert 1}), we obtain
$$
ma \prec a + a_n \preceq a + b_n \preceq (m + 1)a,
$$
which implies that $a_{i_1} = a_n = b_n = ma$.\\

Now, assume that $2a$ is mapped to $a_{i_2}$, for some $i_2 \in [n-1]$. Similarly, we can argue that $i_2 = n - 1$, and $a_{n - 1} = b_{n - 1} = (m - 1)a$. Continuing in this manner, we deduce that $b_i = (m - i + 1)a$, implying $S' = \{m - n + 1, \dots, ma\}$. Thus, by Observation \ref{uniform Schubert}, we conclude that $SM_{m}(a, S') \cong U_{n, m}$.\\
Conversely, assume that $SM_{m}(a, S') \cong U_{n, m}$. Since $0 \notin [m]_a$, it follows from Theorem \ref{symmetric matching} that $[m]_a$ is matched to itself in the group sense. That is, there exists a bijection $f: [m]_a \rightarrow [m]_a$ such that $x + f(x) \notin [m]_a$ for any $x \in [m]_a$. Let $\M = \{a_1, \dots, a_n\}$ be a basis for $M$, set $b_i = f(a_i)$ for $i \in [n]$ and $\N = \{b_1, \dots, b_n\}$. Since $SM_{m}(a, S')$ is a uniform matroid with rank $n$, $\N$ is a basis for $SM_{m}(a, S')$. $\M$ is matched to $\N$ in the matroid sense.
\end{proof}




\begin{corollary}[Symmetric matchability for Schubert matroids]
For sufficiently small $m$, the rank $n$ matroid $SM_m(a,S)$ is matched to itself if and only if $SM_m(a,S) \cong U_{n,m}$.
\end{corollary}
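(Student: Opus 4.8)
The plan is to obtain this as an immediate corollary of Theorem~\ref{Asymmetric Schubert} by taking $S' = S$ and specializing the ``asymmetric'' statement to the symmetric case. Concretely, I would first note that for a matroid $M$ of the form $SM_m(a,S)$, self-matchability is precisely the statement ``$M$ is matched to $SM_m(a,S')$'' with $S' = S$. So applying Theorem~\ref{Asymmetric Schubert} with this choice of $S'$ (and noting that the hypotheses on $G$, on $m$ being sufficiently small, and on $a$ nonzero are inherited verbatim) immediately yields: $SM_m(a,S)$ is matched to itself if and only if $SM_m(a,S) \cong U_{n,m}$.

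There is one small caveat to address. Theorem~\ref{Asymmetric Schubert} requires $n \le s < m$ with $s$ the parameter of a panhandle matroid, but in the present symmetric Schubert setting there is no panhandle matroid in play; the hypotheses of Theorem~\ref{Asymmetric Schubert} that actually matter are $n \le m$, $m$ sufficiently small, $a$ nonzero, and $|S| = |S'| = n$. I would simply invoke the branch of Theorem~\ref{Asymmetric Schubert} in which $M = SM_m(a,S)$, where the parameter $s$ plays no role, so no extra assumption is needed. Thus the proof reduces to the single sentence: ``Apply Theorem~\ref{Asymmetric Schubert} with $S' = S$ and $M = SM_m(a,S)$.''

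I do not anticipate any genuine obstacle here, since all the content lives in Theorem~\ref{Asymmetric Schubert}; the only thing to be careful about is making the bookkeeping of hypotheses transparent so that the reader sees the specialization is legitimate. If one wanted a self-contained argument instead, the forward direction would run exactly as in the proof of Theorem~\ref{Asymmetric Schubert}: the basis $\mathcal{M} = \{a, 2a, \dots, na\}$ must be matched to some basis $\mathcal{N}$ of $SM_m(a,S)$, and the chain of inequalities $ma \prec a + a_{i_1} \preceq a + b_{i_1} \preceq a + (m-n+i_1)a$ forces $\mathcal{N} = \{(m-n+1)a, \dots, ma\}$, so by Observation~\ref{uniform Schubert} the matroid is uniform; the converse uses Theorem~\ref{symmetric matching} to match $[m]_a$ to itself in the group sense and then promotes this to a matroid matching because every $n$-subset is a basis of $U_{n,m}$. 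But given Theorem~\ref{Asymmetric Schubert} is already available, the one-line derivation is the right presentation.
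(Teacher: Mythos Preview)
Your proposal is correct and matches the paper's own proof, which reads simply ``It is immediate from Theorem~\ref{Asymmetric Schubert} along with Observation~\ref{uniform Schubert}.'' Your specialization $S'=S$, $M=SM_m(a,S)$ is exactly the intended derivation, and your remark that the parameter $s$ is irrelevant in the Schubert branch of Theorem~\ref{Asymmetric Schubert} correctly handles the only bookkeeping subtlety.
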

\begin{proof}
    It is immediate from Theorem \ref{Asymmetric Schubert} along with Observation \ref{uniform Schubert}.
\end{proof}
\begin{example}
    Consider the matroids $\mathcal{P}_{3,4,5}((2,-1,0), \mathbb{Z}^3)$ and $SM_5((2,-1,0), \mathbb{Z}^3, S)$ as discussed in Examples \ref{Panhandle example} and \ref{Schubert example}. By Theorem \ref{Asymmetric Schubert}:
\begin{itemize}
    \item The matroid $SM_5((2,-1,0), \mathbb{Z}^3, S)$ is not matched to itself.
    \item The matroid $\mathcal{P}_{3,4,5}((2,-1,0), \mathbb{Z}^3)$ is not matched to $SM_5((2,-1,0), \mathbb{Z}^3, S)$.
\end{itemize}
\end{example}
\begin{example}
    Consider the matroids $\mathcal{P}_{3,4,5}((2,-1,0), \mathbb{Z}^3)$ and $SM_5((2,-1,0), \mathbb{Z}^3, T)$, where $T=\left\{ \ (6, -3, 0), \, (8, -4, 0), \, (10, -5, 0)\right\}$. By Theorem \ref{Asymmetric Schubert}:
    \begin{itemize}
        \item The matroid $SM_5((2,-1,0), \mathbb{Z}^3, T)$ is matched to itself.
        \item The matroid $\mathcal{P}_{3,4,5}((2,-1,0), \mathbb{Z}^3)$ is matched to $SM_5((2,-1,0), \mathbb{Z}^3, T)$.
    \end{itemize}
\end{example}

\section*{Acknowledgements} 
We are grateful to Shira Zerbib for her valuable suggestions and many insightful conversations. We also thank the referees for their helpful comments and constructive feedback.\\


\textbf{Data sharing:} Data sharing is not applicable to this article as no datasets were generated or analyzed.\\
\textbf{Conflict of interest:} To our best knowledge, no conflict of interests, whether of financial or personal nature, has influenced the work presented in this article.

\end{document}